\long\global\def\C#1\F{{}}
\newtheorem{theorem}{Theorem}
\newtheorem{lemma}[theorem]{Lemma}
\newtheorem{definition}[theorem]{Definition}
\newtheorem{remark}[theorem]{Remark}
\newtheorem{example}[theorem]{Example}
\newtheorem{gauss graph}[theorem]{The Graph of Gauss's Puzzle}
\newtheorem{arrang hyper}[theorem]{An Arrangement of Hyperplanes}
\newtheorem{counterexample}[theorem]{A Counterexample}
\renewenvironment{proof}{{\sc Proof.}}{\EOP\wl}
\def\vtl{\vskip 1mm}
\def\tl{\vskip 2mm}
\def\wl{\vskip 4mm}
\def\es{\varnothing}
\def\SB{\subseteq}
\def\ss{\sigma}
\def\Ree{\mathbb R}
\def\Span{\mathbb S}
\def\EQ{\Longleftrightarrow}
\def\eq{\Leftrightarrow}
\def\EOP{\phantom{a}\hfill $\square$}
\def\XXX{{\cal X}}
\def\AAA{{\cal A}}
\def\FFF{{\cal F}}
\def\GGG{{\cal G}}
\def\HHH{{\cal H}}
\def\KKK{{\cal K}}
\def\BBB{{\cal B}}
\def\VVV{{\cal V}}
\def\WWW{{\cal W}}
\def\POW{\mathfrak P}
\def\begeq{\begin{equation}}
\def\edeq{\end{equation}}
\def\st{{\;\vrule height8pt width0.7pt depth2.5pt\;}}
\def\sqr#1#2{{\vcenter{\vbox{\hrule height.#2pt
          \hbox{\vrule width.#2pt height#1pt \kern#1pt
              \vrule width.#2pt}
           \hrule height.#2pt}}}}
\def\square{\mathchoice\sqr56\sqr56\sqr{2.1}3\sqr{1.5}3}
\def\roster{\begin{enumerate}}
\def\endroster{\end{enumerate}}
\def\fp{\noindent}
\let\slantedexample\example
\def\example{\slantedexample\rm}
\let\slantedremark\remark
\def\remark{\slantedremark\rm}
\let\slantedremarks\remarks
\def\remarks{\slantedremarks\rm}
\let\slanteddefinition\definition
\def\definition{\slanteddefinition\rm}
\let\slantednote\note
\def\note{\slantednote\rm}
\let\slantednotes\notes
\def\notes{\slantednotes\rm}
\long\def\@makecaption#1#2{%
  \vskip\abovecaptionskip
  \sbox\@tempboxa{\small #1: \sc #2}%
  \ifdim \wd\@tempboxa >\hsize
    \small #1: \sc #2\par
  \else
    \global \@minipagefalse
    \hb@xt@\hsize{\hfil\box\@tempboxa\hfil}%
  \fi
  \vskip\belowcaptionskip}
\begin{document}
\pagestyle{plain}

\title{\vspace*{-1.5cm}\Large On Verifying and Engineering the Well-gradedness\\ of a Union-closed Family\thanks{We are grateful to Eric Cosyn and Chris Doble for some useful discussions. We also thank the two referees for their detailed comments and suggestions, many of which were incorporated in the final version. }}
\author{David Eppstein\thanks{Computer Science Department,
University of California, Irvine, CA 92697.
}
\hskip 1cm Jean-Claude Falmagne\thanks{Dept.~of Cognitive Sciences,
University of California, Irvine, CA 92697. Phone: (949) 433 2735.}\\
\normalsize \{eppstein, jcf\}@uci.edu\\
\\
Hasan Uzun\thanks{ALEKS Corporation.}\\
\normalsize huzun@aleks.com}

\date{\small \today}
\maketitle

\thispagestyle{empty}

\begin{abstract}
\fp
Current techniques for generating a knowledge space, such as QUERY, guarantees  that the resulting structure is closed under union, but not that it satisfies wellgradedness, which is one of the defining conditions for a learning space. We give necessary and sufficient conditions on the base of a union-closed set family that ensures that the family is well-graded. We consider two cases, depending on whether or not the family contains the empty set. We also provide algorithms for efficiently testing these conditions, and for augmenting a set family in a minimal way to one that satisfies these conditions.
\end{abstract}

\section*{Introduction}
A family of sets  $\FFF$ is well-graded if any two sets in $\FFF$ can be connected by a sequence of sets formed by single-element insertions and deletions, without redundant operations, such that all intermediate sets in the sequence belong to $\FFF$. The family $\FFF$ is called $\cup$-closed if it is closed under union. (Formal definitions are given in our next section.) Well-graded families are of interest for theorists in several different areas of combinatorics, as various families of sets or relations are well-graded. For example, Theorems 2 and 4 in \citet{bogar73} imply that the family of all partial orders on a finite set is well-graded. The same property of well-gradedness is shared by other families, such as the semiorders, the interval orders, and the biorders, again on finite sets \citep{doign97}.  Via representation theorems, this concept also applies to the \emph{partial cubes}, to wit, graphs isometrically embeddable into hypercubes  \citep[][]{graha71,djoko73,winkl84,imric00}, and to the \emph{oriented media} which are semigroups of transformations satisfying certain axioms \citep{falma97a,eppst07b}.

When the family $\FFF$ is well-graded, $\cup$-closed and contains the empty set, one obtains an object variously called an \emph{antimatroid} \citep{KorLovSch-91}, a  \emph{learning space} \citep{cosyn08,falma06}, or a \emph{well-graded knowledge space}
 \citep{doignon:85}. The monograph of \citet{doign99} contains a comprehensive account of this topic.
Learning spaces are applied in mathematical modeling of education. In such cases, the ground set is the collection of problems, for example in elementary arithmetic, that a student must learn to solve in order to master the subject. The family $\FFF$ contains then all the subsets forming the feasible \emph{knowledge states}. In practice, the size of such a family is quite large, typically containing millions of states\footnote{For a ground set  that may contain a couple of hundreds of problem types.}, which raises the problem of summarizing $\FFF$ efficiently. An obvious choice for this purpose is the \emph{base} of that family, namely the unique minimal subset of $\FFF$ whose completion via all possible unions gives back 
$\FFF$.

For various reasons, when building a learning space in practice, one may fall short of some sets to achieve well-gradedness, a property regarded as essential for promoting efficient learning  \citep[see the axiomatization of][]{cosyn08}. This raises the problems of uncovering possibly missing sets, and completing the family economically and/or optimally. 
These  considerations inspired the work presented here.

We solve the following problems for a finite family $\GGG$ of finite sets.
\begin{enumerate}
\item  Find  necessary and sufficient conditions for $\GGG$ to be the base of a well-graded $\cup$-closed family of sets.
\item Find such conditions  when the well-graded $\cup$-closed family of sets is known to be a learning space, that is, the family contains the empty set. (These conditions may be simpler than in Case 1.)
\item Provide efficient algorithms for testing these conditions on a family 
$\GGG$ and uncovering possibly missing sets.
 (Different algorithms may be used in Problems 1 and 2.)
\item Supposing that some family $\GGG$ fails to satisfy the conditions in Problems 1 or~2, provide  algorithms for modifying $\GGG$ in some optimal sense to yield a family $\GGG^*$ satisfying such conditions.
\end{enumerate}

Except for the passing remark involving Counterexample \ref{counter wg no atom}, only finite sets are considered in this paper.

\section*{Background and Preparatory Results}

\begin{definition}
\label{def k-path}
Let $\FFF$ be a family of subsets of a set $\XXX$. A {\sl tight path between} two distinct sets $P$ and $Q$ (or from $P$ to $Q$) in $\FFF$ is a sequence
$P_0=P, P_1,\ldots,P_n = Q$ in $\FFF$
such that  $d(P,Q)= |P\bigtriangleup Q|= n$ and  $d(P_i,P_{i+1}) = 1$ for $0\leq i\leq n-1$. 

The family $\FFF$ is  {\sl well-graded} or a {\sl wg-family} if there is a tight path between any two of its distinct sets.
\citep[cf.][]{doign97, falma97b}\footnote{This concept was introduced earlier under a different name; see \citet{kuzmi75}, \citet{ovchi80}.}.
\end{definition}

\begin{definition}
\label{U closure}
A family of sets $\FFF$ is {\sl closed under union}, or  {\sl $\cup$-closed} if for any nonempty\footnote{For some authors, the subfamily $\GGG$ may be empty, with $\cup\es = \es$. So, a $\cup$-closed family automatically contains the empty set. We do not use this convention here.} $\GGG\SB\FFF$ we have
$\cup \GGG\in\FFF$. A well-graded family closed under union and containing the empty set is a \emph{learning space}.
\end{definition} 
 \begin{definition}\label{span-base}  The {\sl span} of a family of sets $\GGG$ is the family $\GGG^\dagger$ containing any set which is the union  of some subfamily\footnote{Contrary to the convention used by \citet{doign99}, the empty subfamily of $\GGG$ is not allowed; so $\es\in \Span(\GGG)$ only if $\es\in\GGG$.} of $\GGG$. In such a case, we write $\Span (\GGG) = \GGG^\dagger$  and we say that $\GGG$ {\sl spans} $\GGG^\dagger$. By definition $\Span(\GGG)$ is thus $\cup$-closed. A {\sl base} of a $\cup$-closed family 
$\FFF$  is a minimal subfamily $\BBB$ of $\FFF$ spanning $\FFF$ (where `minimal' is meant with respect to set inclusion: if $\Span (\HHH) = \FFF$ for some $\HHH \SB \BBB$, then $\HHH = \BBB$). Notice that if $\es\in\FFF$, we must have $\es\in \BBB$, with $\cup\{\es\} = \es$. In such a case, we use the abbreviation $\check \BBB = \BBB\setminus\{\es\}$. Note that a family $\GGG$ spanning a family $\FFF$ is a base of $\FFF$ if and only if none of the sets in $\GGG$ is the union of some other sets in $\GGG$.
\end{definition}

Any finite $\cup$-closed family has a base, which is unique.
 This uniqueness property of the base also holds in the infinite case but some infinite families have no base: take, for example, the collection of all open sets of $\Ree$ or Counterexample~\ref{counter wg no atom}.  \citep[In this regard, see][Theorems 1.20 and 1.22.]{doign99} 
 
The following lemma is a key tool, as it allows us to infer the wellgradedness of a family from that of its base.
 
\begin{lemma}
\label{span is wg}
The span of a wg-family is well-graded.
\end{lemma}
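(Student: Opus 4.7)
My plan is to reduce the lemma to a one-step claim and then induct on symmetric-difference distance. Writing $\FFF := \Span(\GGG)$, the key step I aim for is:
\emph{for any $S, U \in \FFF$ with $S \subsetneq U$, there exists $w \in U \setminus S$ with $S \cup \{w\} \in \FFF$.}
Once this is known, for distinct $S, T \in \FFF$ I set $U = S \cup T \in \FFF$: then at least one of $S \subsetneq U$ or $T \subsetneq U$ must hold, and the key step applied to the appropriate pair produces $w \in S \bigtriangleup T$ such that one of $S \cup \{w\}$, $T \cup \{w\}$ lies in $\FFF$ and sits one step closer to its partner along a hypercube geodesic. Induction on $|S \bigtriangleup T|$ then assembles a tight path in $\FFF$.

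To establish the key step I would use the span representation together with the well-gradedness of $\GGG$. Write $S = \bigcup \mathcal{S}$ and $U = \bigcup \mathcal{U}$ with $\mathcal{S}, \mathcal{U} \subseteq \GGG$ nonempty (if $S = \emptyset$, the convention recorded in Definition~\ref{span-base} forces $\emptyset \in \GGG$, so $\mathcal{S} = \{\emptyset\}$ is admissible). Since $U \not\subseteq S$, there is $G \in \mathcal{U}$ with $G \not\subseteq S$; fix any $H \in \mathcal{S}$, so that $H \subseteq S$. Well-gradedness of $\GGG$ supplies a tight path
\[
H = P_0, P_1, \ldots, P_n = G
\]
inside $\GGG$. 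Because a tight path is a geodesic in the hypercube, we have $H \cap G \subseteq P_i \subseteq H \cup G \subseteq U$ for every $i$. The integer $|P_i \setminus S|$ equals $0$ at $i = 0$ and is at least $1$ at $i = n$, so let $j$ be the smallest index with $|P_j \setminus S| \geq 1$. Then $P_{j-1} \subseteq S$, and since $P_j$ differs from $P_{j-1}$ by a single element we must have $P_j = P_{j-1} \cup \{w\}$ for the unique $w \notin S$. Consequently $P_j \setminus S = \{w\}$, $w \in P_j \subseteq U$, and
\[
S \cup \{w\} = S \cup P_j = \bigcup(\mathcal{S} \cup \{P_j\}) \in \FFF,
\]
which is exactly what the key step asserts.

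The main obstacle, I expect, is spotting this auxiliary claim: rather than trying to step directly between $S$ and $T$, one routes through $S \cup T$ and looks only for single-element \emph{additions}, so that only one direction along the hypercube needs to be controlled. Once that reduction is in hand, the point is the simple observation that a tight path in $\GGG$ from a ``low'' witness $H \subseteq S$ to a ``high'' witness $G \not\subseteq S$ is forced to cross the boundary of $S$ at some single step, and that first crossing supplies the element $w$ we need.
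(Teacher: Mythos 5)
Your proposal is correct and follows essentially the same route as the paper's proof: route through the union $S\cup T$, pick a spanning set $H\subseteq S$ and a spanning set $G\not\subseteq S$, take a tight path in $\GGG$ between them, and use its first crossing out of $S$ to add a single new element while staying in the span. The only differences are organizational (isolating the one-step claim and inducting on $|S\bigtriangleup T|$) plus your explicit note that tight paths are hypercube geodesics, hence stay inside $H\cup G\subseteq U$ — a point the paper uses implicitly.
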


\begin{proof}
Let $\Span (\GGG)$ be the span of some wg-family $\GGG$. Take any two distinct $X,Y$ in $\Span(\GGG)$. 
Since $\Span (\GGG)$ is $\cup$-closed by definition,  $X\cup Y$ is in $\Span (\GGG)$ and we have  
$
d(X,Y) = d(X,X\cup Y) + d(X\cup Y, Y).
$
Accordingly, it suffices to prove that there is in $\Span(\GGG)$ a tight path
\begin{equation}
X_1 = X,X_2,\ldots,X_n = X\cup Y,
\end{equation}
with in fact $X_i\subset X_{i+1}$, $1\leq i\leq n-1$.
By definition of the span, there exists finite $\HHH,\KKK \SB \GGG$ such that $X=\cup\HHH$ and $Y=\cup\KKK$. Without loss of generality (exchanging the roles of $X$ and $Y$ if needed), we can assume that there exists some $K\in\KKK$ such that $K\setminus X\neq\es$. Choose $H\in \HHH$ arbitrarily. By the wellgradedness of $\GGG$, there is a tight path $H_1 = H,\ldots,H_m = K$. Let $k$ be the first index such that $H_k\setminus X\neq \es$.
(Such an index must exist because $K\setminus X\neq \es$.) We necessarily have $|H_k\setminus X| = 1$.
Defining $X_2 =( \cup \HHH) \cup H_k$, we obtain
$X_1= X \subset X_2\SB X\cup Y$ with $|X_2\setminus X_1| = 1$.
An induction completes the proof.
\end{proof}

Note however that the base of a $\cup$-closed wg-family need not be well-graded.
      
\begin{example}
\label{base +wg base}
The $\cup$-closed wg-family
\begin{gather}\nonumber
\hskip -2cm \FFF=\{\es, \{a\},\{b\},\{c\}, \{a,b\},\{a,c\},\{b,c\},\{c,d\}, \{a,b,c\},\\ \label{counter wg base}
\hskip 2cm \{a,c,d\}, \{b,c,d\},\{a,b,c,d\},
 \{a,b,c,d,e\}\}.
\end{gather}
has the base
$
\{\es, \{a\},\{b\},\{c\},\{c,d\},\{a,b,c,d,e\}\},
$
which is not well-graded. Moreover, $\FFF$ has two different minimal well-graded subfamilies spanning $\FFF$: 
\begin{gather}\nonumber \hskip -3cm
\{\es,  \{a\},  \{b\},  \{c\}, \{a,b\},\{a,c\}, \{c,d\},\{a,b,c\},\\
\label{counter wg base1} \hspace{3cm} \{a,c,d\}, \{a,b,c,d\},\{a,b,c,d,e\}\},\\ \nonumber \hskip -3cm
\{\es, \{a\},\{b\},\{c\}, \{a,b\},\{b,c\}, \{c,d\},\{a,b,c\},\\ \label{counter wg base2}\hspace{3cm} \{b,c,d\}, \{a,b,c,d\},\{a,b,c,d,e\}\}.
\end{gather}
\end{example}

\begin{example}
\label{u-i-closed base not wg}
Notice that the base of a family which is both $\cup$-closed and $\cap$-closed (that is, closed under intersection) is not necessarily well-graded. Indeed, consider the family
\begin{gather*}
\hskip -1cm \GGG= \{\es, \{a\},\{b\},\{d\},\{a,b\}, \{a,d\},\{b,d\}, \{a,b,c\},\{a,b,d\},\\
\hskip 7cm \{a,b,c,d\},\{a,b,c,d,e\}\},
\end{gather*}
for which
$
\{\es, \{a\},\{b\},\{d\},\{a,b,c\},\{a,b,c,d,e\}\}
$
is the base.
\end{example} 

\section*{Main Results}

\begin{theorem}
\label{base => wg}
Let $\FFF$ be a $\cup$-closed family with base $\BBB$. Then $\FFF$ is a wg-family if and only if,
for any two distinct sets $K$ and $L$ in $\BBB$, there is a tight path in $\FFF$ 
from $K$ to $L\cup K$. If $\BBB$ contains the empty set, then $\FFF$ is well-graded if and only if  there is a tight path from $\es$ to $K$ for any $K$ in $\BBB$.
\end{theorem}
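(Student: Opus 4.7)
The plan is to handle each biconditional by treating the forward direction as an immediate observation and focusing the work on sufficiency. For the forward directions, $\BBB\SB\FFF$ together with $\cup$-closure ensures that $K$, $L$, and $K\cup L$ all lie in $\FFF$ (as does $\es$ in the second statement), so wellgradedness of $\FFF$ directly supplies the asserted tight paths by definition.

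For the backward direction of the first statement, I adapt the proof of Lemma~\ref{span is wg}. Given distinct $X,Y\in\FFF$, the identity $d(X,Y)=d(X,X\cup Y)+d(X\cup Y,Y)$ again reduces the task to producing a strictly increasing chain from $X$ to $X\cup Y$ in $\FFF$ whose consecutive members differ by a single element. Writing $X=\cup\HHH$ and $Y=\cup\KKK$ with $\HHH,\KKK\SB\BBB$, picking $K\in\KKK$ with $K\setminus X\neq\es$ (after possibly swapping $X$ and $Y$) and any $H\in\HHH$, the decisive move is to replace the appeal to wellgradedness of $\BBB$ used in Lemma~\ref{span is wg} with the hypothesized tight path $H=H_1\subset\cdots\subset H_m=H\cup K$ in $\FFF$, which must be a strict chain of single insertions since $d(H,H\cup K)=|K\setminus H|$. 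Choosing the minimal index $k$ with $H_k\setminus X\neq\es$ (which exists because $K\setminus X\neq\es$), I set $X_2:=X\cup H_k\in\FFF$; the minimality of $k$ forces $H_{k-1}\SB X$, so $|X_2\setminus X|=1$, and clearly $X_2\SB X\cup Y$. An induction on $|(X\cup Y)\setminus X|$ then completes the chain.

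For the backward direction of the second statement, I reduce to the first by constructing, for any $K,L\in\BBB$, a tight path from $K$ to $K\cup L$ in $\FFF$. Starting from the hypothesized tight path $\es=P_0\subset\cdots\subset P_n=L$, I form $Q_j:=K\cup P_j\in\FFF$ (by $\cup$-closure); consecutive $Q_j$'s either coincide (when the element inserted at step $j+1$ already belongs to $K$) or differ by a single new element, so deleting repetitions yields the desired tight path from $K$ to $K\cup L$, and the first statement of the theorem then applies.

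The main obstacle is the bookkeeping around the minimal index $k$ in the first backward direction: I need both the minimality of $k$ (to see that $H_{k-1}\SB X$) and the single-insertion structure of the tight path between $H$ and $H\cup K$ (to conclude that $H_k$ and $H_{k-1}$ differ by one element, and hence that $X_2$ exceeds $X$ by exactly one element). Everything else amounts to bookkeeping and an induction.
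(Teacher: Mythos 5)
Your proof is correct, and its combinatorial engine is the same as the paper's: reduce to finding an increasing tight path from $X$ to $X\cup Y$, pick $H\in\HHH$ and $K\in\KKK$ with $K\setminus X\neq\es$, follow the hypothesized tight path from $H$ to $H\cup K$ until its first member escapes $X$, and adjoin that member; likewise, your reduction of the second statement to the first, by unioning a tight path from $\es$ to $L$ with $K$ and deleting repetitions, is exactly the paper's closing argument. The structural difference is that the paper does not run this induction on arbitrary members of $\FFF$: it first forms the auxiliary family $\BBB^*$ of all unions of subfamilies of $\BBB$ sandwiched between some $K$ and $K\cup L$ with $K,L\in\BBB$ (see (\ref{build BBB*})), argues that $\BBB^*$ is well-graded, and then invokes Lemma~\ref{span is wg} to transfer well-gradedness from $\BBB^*$ to its span $\FFF$. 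Your direct route is leaner: every set you construct, such as $X_2=X\cup H_k$, lies in $\FFF$ simply by $\cup$-closure, so you never have to verify membership in an intermediate family, and you use only the proof idea of Lemma~\ref{span is wg}, not its statement. What the paper's detour buys is the explicit spanning family $\BBB^*\supseteq\BBB$, which is the object of interest in the Remark following the theorem (it is generally not a minimal well-graded spanning family); your argument does not exhibit such a subfamily, but it establishes precisely what the theorem asserts, and the bookkeeping you single out (minimality of $k$ forcing $H_{k-1}\SB X$, and the fact that a tight path between nested sets is a strict chain of single insertions) is handled correctly.
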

Thus, this result provides a solution to Problems 1 and 2. Another solution to Problem 2 is given by Lemma \ref{lem:wg with empty by endpoints}.
\tl
\begin{proof}
As $\FFF$ is $\cup$-closed with base $\BBB$, the necessity is clear for both statements. To establish that the sufficiency in the first statement also holds, we point out that the family $\BBB^*$ defined by
\begin{gather}\label{build BBB*}
M\in \BBB^*\,\,\EQ\,\,\begin{cases}  M = \cup \AAA\text{ for some }\AAA\SB \BBB\text{ such that}\\
 K\SB \cup \AAA\SB K\cup L \text{ for some }K,L\in\BBB
 \end{cases}
\end{gather}
includes $\BBB$  since $K = \cup \{K\}$ and $K\SB\cup\{K\}\SB K\cup L$ for any $K$ and $L$ in~$\BBB$. 
Since $\BBB \SB \BBB^*\SB \FFF$ the family $\BBB^*$ spans $\FFF$. 
We claim that $\BBB^*$ is well-graded, which implies by Lemma \ref{span is wg} that $\FFF$ is well-graded.   The main line of our argument is similar to that used in the proof of Lemma \ref{span is wg}.

Take any two distinct $V,W\in\BBB^*$. By definition of $\BBB^*$, we have $V = \cup \VVV$ and $W =\cup\WWW$ for some subfamilies $\VVV$ and $\WWW$ of $\BBB$. Suppose that $d(V,V\cup W)=n$. 
We have to show that there exists in $\BBB^*$ a tight path 
$$
V_0 = V, V_1,\ldots,V_n = V\cup W
$$
from $V$ to $V\cup W$.
Without loss of generality (exchanging the roles of $V$ and $W$ if needed), we can assume that there is some $H\in\WWW$ such that $H\setminus V\neq\es$. Choose $G\in \VVV$ arbitrarily. Then $G\subset H\cup G \SB V\cup W$, with $H$ and $G$ in~$\BBB$. By hypothesis, there is  a tight path $G_0 = G, G_1,\ldots,G_m = G\cup H$ from~$G$ to $G\cup H$ in $\FFF$, with $G \subset G_i\subset G\cup H$ and 
$d(G,G_i) = i$ for $1\leq i \leq m$.
Let $k$ be the first index such that $G_k\setminus V\neq \es$.
(Such an index must exist because $H\setminus V\neq \es$.) We necessarily have $|G_k\setminus V| = 1$.
Defining $V_1 =( \cup \VVV) \cup G_k$, we obtain
$V_0= V \subset V_1\SB V\cup W$ with $|V_1\setminus V_0| = 1$.
An induction completes the proof of the sufficiency for the first statement. 

We now show that if $\es\in\BBB$, then there is a tight path from $L$ to $K\cup L$ for any $K$ and $L$ in $\BBB$. Thus, the sufficiency of the second statement follows from that in the first statement. Indeed, let $K_0=\es,K_1,\ldots,K_n =K$ be a tight path. It is easily seen that, after removal of identical terms if need be, the sequence $K_0\cup L = L$, $K_1\cup L, \ldots,K_n\cup L= K\cup L$ is a tight path from $L$ to $K\cup L$.
\end{proof}

\begin{remark}
The set $\BBB^*$ constructed in the proof of Theorem 
\ref{base => wg} is not necessarily a minimal wg-family spanning 
$\FFF$. Indeed, the definition of $\BBB^*$ by (\ref{build BBB*}) includes all the unions $\cup\AAA$, while only some of them may be needed. An example  was provided by the wg-family of Example \ref{base +wg base}. In this case, 
each of (\ref{counter wg base1}) and (\ref{counter wg base2}) is a minimal wg-family including the base and spanning the wg-family $\FFF$ defined by (\ref{counter wg base}). 
The set $\BBB^*$ in this case would be the union of the two families in
(\ref{counter wg base1}) and (\ref{counter wg base2}), which is  in fact equal  to $\FFF$.  
\end{remark}

In the case of learning spaces, \citet{koppen:98} obtained a different, but equivalent answer to Problem 1 (see Theorem \ref{koppen theo}). As shown by Counterexample \ref{counter gen mathieu}, Koppen's result does not generalize to the case in which the family does not contain the empty set. 
We review this result below. To this end, we recall some concepts and results of \citet{doign99}, which we adapt to the general case in which the empty set is not assumed to belong to the family\footnote{All of \citet{doign99}'s results were developed in the context of knowledge spaces, that is $\cup$-closed families containing the empty set. We drop the latter condition here.}.  Even though the proofs of Theorems \ref{atom iff} and \ref{base = atoms} are essentially those of Theorems 1.25 and 1.26 in \citet{doign99}, we include those proofs for completeness because our context is more general.

\begin{definition}
\label{atoms}
For any $x$ in $\XXX = \cup\FFF$, where $\FFF$ is a $\cup$-closed family, an {\sl atom at} $x$ is a minimal set of $\FFF$ containing $x$ (where `minimal' is with respect to set inclusion). A set $X$ in $\FFF$ is called an {\sl atom}\footnote{Our meaning of the term `atom'  is different from its usage in lattice theory; cf.~\citet{birkh67}, \citet{davey90}. It also slightly differs from that in \citet{doign99} because we do not allow the empty union of a family (see Footnote 4, 5 and 6).}  if either $X = \es\in\FFF$, or there is some $x\in\XXX$ such that $X$ is an atom at $x$. Writing $\POW(\FFF)$ for the power set of $\FFF$, we denote by 
$\ss(x)$ the collection of all the atoms at $x$ and refer to $\ss: \XXX \to \POW(\FFF)$ as the {\sl surmise function} of $\FFF$. Clearly, since $\XXX$ is finite, we have $\ss(x)\neq\es$ for every $x\in\XXX$; thus, there is at least one atom at every point of $\XXX$. (But see Counterexample \ref{counter wg no atom}.)
\end{definition}

\begin{theorem}
\label{atom iff}
A nonempty set $X$ in a $\cup$-closed family $\FFF$ is an atom if and only if $X\in\HHH$ for any subfamily $\HHH$ of $\FFF$ satisfying $\cup \HHH = X$.
\end{theorem}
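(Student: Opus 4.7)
The plan is to split into the two implications and handle each directly, using the contrapositive for the sufficiency direction. Both halves rest just on unpacking Definition \ref{atoms}.

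For the forward direction (necessity), I would assume $X$ is an atom. Since $X$ is nonempty, Definition \ref{atoms} gives some $x \in \XXX$ such that $X$ is a minimal member of $\FFF$ containing $x$. Now take any $\HHH \subseteq \FFF$ with $\cup \HHH = X$. Because $x \in X = \cup \HHH$, some $H \in \HHH$ contains $x$; but $H \subseteq \cup \HHH = X$, and so the minimality of $X$ among members of $\FFF$ containing $x$ forces $H = X$, yielding $X \in \HHH$.

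For the converse (sufficiency), I would prove the contrapositive: if the nonempty set $X \in \FFF$ is \emph{not} an atom, then I can exhibit some $\HHH \subseteq \FFF$ with $\cup \HHH = X$ but $X \notin \HHH$. Since $X$ is nonempty and not an atom, $X$ fails to be an atom at every $x \in \XXX$, and in particular at every $x \in X$. Thus for each such $x$ there exists $A_x \in \FFF$ with $x \in A_x$ and $A_x \subsetneq X$ (otherwise $X$ itself would be minimal in $\FFF$ among sets containing $x$). Setting $\HHH = \{A_x : x \in X\}$, the inclusions $x \in A_x \subseteq X$ give $\cup \HHH = X$, and $A_x \neq X$ for every $x$ shows $X \notin \HHH$, completing the contrapositive.

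There is no serious obstacle here; the only point needing care is the distinction between ``$X$ is not an atom'' and ``$X$ is not an atom at a particular $x$''. Because Definition \ref{atoms} declares $X$ an atom as soon as there exists \emph{some} witness $x$ making $X$ minimal over $x$, the negation gives the uniform statement that \emph{every} $x \in X$ admits a strictly smaller $A_x \in \FFF$ containing it, which is exactly what the covering construction needs.
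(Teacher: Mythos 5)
Your proof is correct and follows essentially the same route as the paper: the necessity direction uses minimality at a witness $x$ to force any covering member containing $x$ to equal $X$, and the sufficiency direction is the same contrapositive covering construction $\HHH=\{A_x : x\in X\}$ with $A_x\subsetneq X$. No substantive differences.
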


\begin{proof}
(Necessity.) Suppose that $X$ is an atom at some $x \in \cup\FFF$, with $X =\cup \HHH$ for some subfamily $\HHH$ of $\FFF$. Then $x\in Y$ for some $Y\in\HHH$, with necessarily $Y\SB X$. This implies $Y = X$ because $X$ is a minimal set containing $x$, and so $X\in\HHH$. The case of the atom $\es$ is straightforward.

(Sufficiency.) If some $X\in\FFF$ is not an atom, then for each $x\in X$, we must have 
$x\in Y(x) \subset X$ for some $Y(x) \in\FFF$. Writing $\HHH = \{Y(x)\st x\in X\}$, we get $\cup\HHH =  X$, with $X\notin \HHH$.
\end{proof}

\begin{theorem}
\label{base = atoms}
The base of a   $\cup$-closed family $\FFF$ is the collection of all its atoms.
\end{theorem}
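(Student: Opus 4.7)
The plan is to prove double inclusion: every atom lies in $\BBB$, and every member of $\BBB$ is an atom. Both directions follow quickly from Theorem \ref{atom iff}, combined with the criterion from the last sentence of Definition \ref{span-base} that a spanning family is a base exactly when none of its members is the union of some other members.

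First I would verify that every atom is contained in $\BBB$. If $X = \es$ is an atom, then by hypothesis $\es \in \FFF$ and Definition \ref{span-base} already forces $\es \in \BBB$. If $X$ is a nonempty atom, then $X \in \FFF = \Span(\BBB)$, so $X = \cup \HHH$ for some nonempty $\HHH \SB \BBB$; Theorem \ref{atom iff} immediately gives $X \in \HHH$, hence $X \in \BBB$.

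For the reverse inclusion, I would take $X \in \BBB$; again $X = \es$ is an atom by definition, so assume $X \neq \es$ and suppose for contradiction that $X$ is not an atom. Theorem \ref{atom iff} then yields $\HHH \SB \FFF$ with $\cup \HHH = X$ and $X \notin \HHH$, so every $Y \in \HHH$ is a proper subset of $X$. Writing each such $Y$ as a union of elements of $\BBB$ (each contained in $Y$, hence properly in $X$) and collecting these, I obtain $X$ as the union of a subfamily of $\BBB \setminus \{X\}$. This violates the base criterion from Definition \ref{span-base}, contradicting the assumption that $\BBB$ is a base.

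I do not anticipate any serious obstacle. The only point requiring care is the empty-set convention from Footnote 6, which forbids empty unions and thereby forces $\es$ to be treated as its own case in each direction.
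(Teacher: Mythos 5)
Your proof is correct, but it is organized differently from the paper's. The paper does not argue by double inclusion with a pre-existing base: it takes the collection $\AAA$ of all atoms and shows directly that $\AAA$ is \emph{a} base, by proving (i) $\AAA$ spans $\FFF$ (for each nonempty $X\in\FFF$ and each $x\in X$, finiteness yields an atom at $x$ included in $X$, and the union of these atoms is $X$), and (ii) $\AAA$ is contained in every spanning subfamily $\HHH$, via Theorem \ref{atom iff}; minimality and hence the identification with the (unique) base follow. Your first inclusion (every atom lies in $\BBB$) is essentially the paper's step (ii) specialized to $\HHH=\BBB$. Your second inclusion replaces the paper's spanning argument (i): instead of exhibiting atoms inside each set of $\FFF$, you assume $\BBB$ is the base, invoke the criterion from Definition \ref{span-base} that no member of a base is a union of other members, and derive a contradiction by expanding a witness family from Theorem \ref{atom iff} into elements of $\BBB$ properly contained in $X$. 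This is sound, including the empty-set bookkeeping, but note what each route buys: the paper's argument is self-contained in that it constructs the base as the atom collection, re-establishing existence along the way, whereas yours leans on the previously cited existence and uniqueness of the base for finite $\cup$-closed families; in exchange, your route avoids the small argument that an atom at $x$ can be found inside any given $X\in\FFF$ containing $x$.
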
 

\begin{proof}
Let $\AAA$ be the collection of all the atoms of $\FFF$.   We claim that $\AAA$ must be the base of $\FFF$.  If $\es\in\FFF$,  we have $\es\in\AAA$ by definition with $\cup\{\es\} = \es$.
Notice that, for any $X\neq\es$ in $\FFF$, the set  $\AAA_X=\{Y\in\AAA\st\exists x\in X,\,\, x\in Y\SB X\}$ exists because there is an atom at every point of $X\SB \XXX$. We have thus $\cup \AAA_X = X$ and so $\AAA$ spans $\FFF$ (whether or not $\es\in\FFF$).  Let now $\HHH$ be another subfamily of $\FFF$ spanning $\FFF$. Take any $Z\in \AAA$. Since $\HHH$ spans $\FFF$, there must be a subfamily $\GGG$ of $\HHH$ such that $\cup\GGG = Z$. By Theorem \ref{atom iff}, we must have $Z\in\GGG\SB\HHH$; this yields $\AAA\SB\HHH$. Thus, $\AAA$ is a minimal family spanning $\FFF$ and so is the (unique) base of $\FFF$. \end{proof}

Note in passing that, in the infinite case, there may not be an atom at every point of the ground set $\XXX = \cup\FFF$ of a $\cup$-closed family $\FFF$. We already gave the example of the collection of all the open sets of $\Ree$. Below is another, simple example.

\begin{counterexample}{\rm
\label{counter wg no atom}
Consider the infinite family $\FFF = \GGG + \HHH$, with
\begin{align}\label{counter GGG}
\GGG &= \{G_n\st G_n = \{\ldots,\frac 1{n+1},\frac 1n\},\,n>1\}
\\ \label{counter HHH}
\HHH&=  \{H_n\st H_n = G_n +\{1\},\,G_n\in\GGG\}.
\end{align}
The family $\FFF$ is $\cup$-closed and well-graded and  there is no atom at $1$. It is easily verified that this $\cup$-closed family $\FFF$ has no base. The $\cup$-closed family $\HHH$ is its own base and has no atom at 1 either.}
\end{counterexample}

We turn to \citet{koppen:98}'s result, which is formulated as the last statement in the theorem below. We recall that $\check \BBB = \BBB\setminus\{\es\}$ for the base $\BBB$ of a learning space.

\begin{theorem}
\label{koppen theo}
Suppose that $\FFF$ is a learning space with base $\BBB$ and surmise function~$\ss$. Then $\{\ss(x)\st x\in\cup \FFF\}$ is a partition of $\check\BBB$ if and only if there is a tight path from $\es$ to $K$ for any $K\in\check\BBB$. Accordingly, $\FFF$ is well-graded if and only if $\{\ss(x)\st x\in\cup \FFF\}$ is a partition of $\check\BBB$.
\end{theorem}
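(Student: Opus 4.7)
The ``Accordingly'' assertion follows from the main biconditional by applying Theorem \ref{base => wg}: in the case $\es \in \BBB$, that result reduces well-gradedness of $\FFF$ to the existence of tight paths from $\es$ to every element of $\BBB$, and the case $K = \es$ is trivial, so it suffices to consider $K \in \check\BBB$. I will therefore concentrate on the biconditional ``partition iff tight paths from $\es$ to each $K \in \check\BBB$''.

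The direction \emph{tight paths $\Rightarrow$ partition} is immediate. If $K \in \check\BBB$ were an atom at two distinct points $x$ and $y$, then the penultimate set of a tight path $\es = K_0 \subset \cdots \subset K_n = K$ is a proper $\FFF$-subset of $K$ which, by the minimality of $K$ at each of $x$ and $y$, can contain neither; but $|K \setminus K_{n-1}| = 1$ then forces $x = y$. So each atom has a unique associated point, which is the partition condition.

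For the converse I would first establish a small lemma: assuming the partition, every atom $K \in \check\BBB$ with unique point $x(K)$ satisfies $K \setminus \{x(K)\} \in \FFF$. The proof is a direct application of $\cup$-closure: for each $y \in K \setminus \{x(K)\}$ uniqueness supplies some $L_y \in \FFF$ with $y \in L_y \subsetneq K$, the union $M = \cup_y L_y$ lies in $\FFF$ and in $K$, and $x(K) \notin M$ on pain of violating the minimality of $K$ at $x(K)$. I then plan to prove by strong induction on $|L|$ the stronger statement that \emph{every} $L \in \FFF$ admits a tight path from $\es$, since the set $K \setminus \{x(K)\}$ produced by the lemma need not itself be an atom. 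The base $L = \es$ is trivial, and the atom case follows directly from the lemma with $y = x(L)$.

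The main obstacle is the inductive step when $L \in \FFF \setminus \check\BBB$, for which I need to exhibit some $y \in L$ with $L \setminus \{y\} \in \FFF$. My plan is argument by contradiction: if no such $y$ exists, then for each $y$ the failure of $L \setminus \{y\} \in \FFF$ supplies a witness $z_y \in L \setminus \{y\}$ such that every $C \in \FFF$ with $z_y \in C \subseteq L$ contains $y$, and the fixed-point-free map $y \mapsto z_y$ yields by finiteness a cycle of length $\geq 2$. Introducing the local surmise relation $y \leq_L z$ meaning ``every $C \in \FFF$ with $z \in C \subseteq L$ contains $y$'', the cycle produces distinct $y_1, y_2$ with $y_1 \equiv_L y_2$. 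Taking $C \in \FFF$ minimal with $y_1 \in C \subseteq L$, any strictly smaller $\FFF$-subset of $C$ containing $y_1$ would automatically lie inside $L$, so $C$ is an atom of $\FFF$ at $y_1$; then $y_2 \in C$ by $\equiv_L$, and a symmetric argument shows $C$ is an atom of $\FFF$ at $y_2$ as well, putting $C \in \ss(y_1) \cap \ss(y_2)$ against the partition. The key insight powering this argument is that the global partition hypothesis forces the local relation $\leq_L$ to be antisymmetric, ruling out such cycles.
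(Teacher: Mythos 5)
Your argument is correct. The easy direction (a tight path from $\es$ to $K$ forces $K$ to be an atom at a unique point, via the penultimate set) and the reduction of the final ``Accordingly'' clause to Theorem~\ref{base => wg} coincide with the paper, but your proof of the main implication (partition $\Rightarrow$ tight paths) takes a genuinely different route. The paper works bottom-up inside a fixed $K\in\check\BBB$: from the partition hypothesis it extracts, for every point of $K$ other than the one at which $K$ is an atom, an atom strictly contained in $K$, and it extends a tight path $\es=K_0,\ldots,K_j\subset K$ one step at a time by a descent argument producing an atom $L\subset K$ with $|L\setminus K_j|=1$, all intermediate states being unions $K_j\cup L\in\FFF$ contained in $K$. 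You instead prove the stronger statement that \emph{every} $L\in\FFF$ is reachable from $\es$ by a tight path, peeling off one element at a time: for atoms, $K\setminus\{x(K)\}$ is the union of the proper subsets supplied by uniqueness (the singleton case being trivial since $\es\in\FFF$); for non-atoms, you rule out the bad case by a cycle argument on the fixed-point-free map $y\mapsto z_y$, obtaining two distinct elements equivalent under the local relation $\le_L$, whence a minimal $C\in\FFF$ with $y_1\in C\SB L$ is an atom at both $y_1$ and $y_2$, contradicting the partition. (One small step you should make explicit: the existence of the witness $z_y$ is itself a short $\cup$-closure argument --- otherwise the union of all members of $\FFF$ contained in $L$ and avoiding $y$ would equal $L\setminus\{y\}$.) The paper's route is more economical, needing the path statement only for base elements and never leaving the subfamily of sets below $K$; yours buys the stronger accessibility property of the whole family (every nonempty state of $\FFF$ loses a single element and remains in $\FFF$), and it isolates the conceptually useful point that the partition hypothesis is precisely what makes the local precedence relation $\le_L$ antisymmetric.
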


\begin{proof}  
{\bf Observation.} By Theorem \ref{base = atoms}, for any $K\in \check\BBB$, there is some $y\in K$ such that $K$ is an atom at $y$. Moreover, the hypothesis that $\{\ss(x)\st x\in\cup \FFF\}$ is a partition of $\check\BBB$ and $|K|>1$ implies that there exists, for any $y'\in K$ distinct from $y$, at least one atom at $y'$ strictly included in~$K$. (Otherwise, we would have $K\in \ss(y)\cap\ss(y')$.) 
\vtl 
Assume that $\{\ss(x)\st x\in\cup \FFF\}$ is a partition of $\check\BBB$. Take any $K$ in $\check\BBB$ and suppose that  $|K| =n$. By the Observation, $K_n=K$ is an atom at $x_n$ for some $x_n\in K$.  We use induction on~$n$.
If $n=1$, then $\es,\{x_1\}=K$ is the tight path. Suppose that we have a tight path $K_0=\es, K_1 = \{x_1\},\ldots,K_j = \{x_1,\ldots,x_j\}$ from $\es$ to $K_j\subset K_n = K$.  From  the
Observation, we know that there exists an atom $L_\ell$ at $y_\ell$ for any $y_\ell\in K\setminus K_j$, with $1\leq \ell\leq n-j$ and $L_\ell\subset K$.   If $|K_j\cup L_\ell |> j+1$ for some index $\ell$, then, again by the Observation, there is some index $i\neq \ell$, $1\leq i\leq n-j$,  
such that $L_i\subset L_\ell$ is an atom at $y_i\in L_\ell\setminus K_j$, with 
$j+1\leq |K_j \cup L_i|< |K_j \cup L_\ell |$. By elimination, we have necessarily some $y_k\notin K_j$, $1\leq k\leq n-j$
and an atom $L_k \subset K$ at $y_k$ such that $K_j \cup \{y_k\} = K_j\cup L_k$. Defining $x_{j+1} = y_k$ and $K_{j+1} = K_j \cup \{x_{j+1}\}$, we obtain the tight path $K_0= \es,K_1,\ldots,K_{j+1}$ from $\es$ to $K_{j+1}\SB K$. Applying induction yields the necessity in the first statement. 

Conversely, assume that there is a tight path from $\es$ to $L$ for any $L\in \BBB$. Suppose that $\es\neq K\in \ss(x)\cap \ss(y)$ for some $K\in\check\BBB$ and some distinct $x,y\in\cup\FFF$. A contradiction ensues because no tight path
$$
K_0=\es,K_1= \{x_1\},\ldots, K_n =\{x_1,\dots,x_n\}=K
$$ 
from $\es$ to $K$ can exist. Indeed, we must have $x,y\in K\setminus K_{n-1}$ since $K$ is an atom at both $x$ and $y$; and yet $|K| = n > n-1 = |K_{n-1}|$.

The last statement of the theorem follows from the last statement in Theorem \ref{base => wg}.\end{proof}

As announced, this result does not generalize to the case in which the family $\FFF$ does not contain the empty set, even if we assume that the family is {\sl discriminative} that is, satisfies the condition: for all $x,y\in \cup\FFF$ 
$$
(\forall X\in\FFF, \,\, x\in X\,\eq\,y\in X)\,\,\EQ\,\,x = y.
$$
\begin{counterexample} \label{counter gen mathieu}{\rm Consider the family $\KKK$ defined by the base
\begin{equation*}
\AAA= \{\{x,y,c\},\{y,d\},\{c,d\}\}.
\end{equation*}
We get the surmise function
\begin{gather*}
\ss(x)= \{\{x,y,c\}\}, \quad
\ss(y) = \{\{x,y,c\},\{y,d\}\},\\
\ss(c)=\{\{x,y,c\},\{c,d\}\},\quad
\ss(d)=\{\{y,d\},\{c,d\}\}.
\end{gather*}
}
\end{counterexample}
It is easily checked that $\KKK$ is discriminative and well-graded; yet, the surmise function does not define a partition of the base $\AAA$.

\section*{Algorithms}

In the algorithms described in this section, we are given as input a family of sets $\BBB$, which is purported to be the base of a $\cup$-closed family $\FFF$. We wish to test whether this is true, and if so to determine other properties of $\FFF$ such as whether it is well-graded or a learning space. In many cases the definitions given in earlier sections of this paper may already be directly translated into algorithms, but a definition may be translated into an algorithm in multiple ways, some more efficient than others; the content of the results lies less in the pure existence of the algorithms and more in designing the algorithms so they perform their tasks efficiently and in analyzing how much time they take to run. The time for our algorithms should be polynomial in the size of our input, if possible; this size is the sum of the cardinalities of the sets in $\BBB$. In particular, this requirement for polynomial time precludes explicit construction of $\FFF$ as the span of $\BBB$, as $\FFF$ may have exponentially greater size.

We assume a standard random-access-machine model of computation in which simple arithmetic steps and memory access operations may be performed in constant time. The input to our algorithms will be families of sets. We assume that each set element is represented as an object that takes a constant amount of computer storage and with which additional information may be associated. For instance, a natural representation with these properties would be to represent the $n$ elements of a set family as integers in the range from $0$ to $n-1$; we may then associate information with each element by using these integers as array indices. We represent an input set as a list of elements, and an input set family as a list of lists of elements. As is standard in the analysis of algorithms, we use $O$-notation to simplify the stated time bounds for our algorithms.

\begin{definition}
In order to analyze and compare the running times of our algorithms, we need parameters to describe the input size. We define $n$ to be the number of sets in $\BBB$, $\ell$ to be the size of the largest set in $\BBB$, and $m$ to be the sum of cardinalities of sets in $\BBB$. We say that an algorithm runs in polynomial time if its worst-case running time can be upper bounded by a polynomial function of $\ell$, $m$ and $n$. For purposes of comparing run times it is convenient to note that $\ell\le m\le n\ell$.
\end{definition}

\begin{definition}
The \emph{endpoints} of a set $X$ belonging to a base $\BBB$ are the elements of the set 
$$
X\setminus\bigcup_{Y\in \BBB, Y\subset X} Y.
$$
That is, the endpoints of $X$ are the elements of $X$ that are not contained in any proper subset of $X$ that belongs to $\BBB$. Equivalently, $x$ is an endpoint of a set $X$ in a base $\BBB$ if $X$ is an atom at $x$.
\end{definition}

\begin{lemma}
\label{lem:endpoint alg}
There is an algorithm that takes as input a set family $\BBB$ and a set $X\in \BBB$, and that outputs the endpoints of $X$, using time $O(m)$.
\end{lemma}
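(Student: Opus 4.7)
The plan is to maintain, for each element of $X$, a Boolean indicator of whether that element is still a candidate endpoint of $X$; these are all initialized to true. In parallel I would maintain a second Boolean flag attached to each element, recording membership in $X$, so that the test ``$y\in X$'' runs in constant time. Under the paper's computational model each element carries a constant amount of associated storage, so setting up these flags for the elements of $X$ takes $O(\ell)$ time.

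Then I would scan each $Y\in\BBB$ in turn. For each $Y$ I would walk through its elements once and use the membership flag to check whether every element of $Y$ lies in $X$, stopping early as soon as one does not. If all elements of $Y$ turn out to lie in $X$ and moreover $|Y|<|X|$ (which can be compared in $O(1)$ from the precomputed cardinalities, and is equivalent to $Y\subset X$ once $Y\SB X$ has been established), I conclude that $Y$ is a proper subset of $X$ and walk through $Y$ a second time, clearing the endpoint-candidate flag of each of its elements. Each such element lies in $X$ and therefore already owns a valid flag, so no extra bookkeeping is needed. The work done for a single $Y$ is $O(|Y|)$ in either branch, so the entire main loop runs in $O\bigl(\sum_{Y\in\BBB}|Y|\bigr)=O(m)$ time.

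Finally I would traverse the elements of $X$ once more, outputting those whose endpoint-candidate flag is still true and, in the same pass, resetting all the flags I touched; since every flag I altered sits on an element of $X$, this cleanup costs only $O(\ell)\le O(m)$. Adding up all three phases gives the claimed $O(m)$ bound, and correctness is immediate from the definition of endpoint: an element $x\in X$ survives with its candidate flag still set if and only if no $Y\in\BBB$ with $Y\subset X$ contains $x$.

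The main point requiring care is the per-$Y$ cost. A naive implementation that checked each element of $Y$ against a list representation of $X$ would spend $\Theta(|Y|\cdot|X|)$ time per set and $\Theta(m\ell)$ time overall, which is too large. The constant-time membership indicator for $X$, together with the observation that two linear scans of $Y$ suffice (one to verify the subset relation, one to clear flags), is what reduces each iteration to $O(|Y|)$ and keeps the total work linear in $m$.
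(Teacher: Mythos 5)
Your proposal is correct and follows essentially the same approach as the paper: constant-time membership tests for $X$ via Boolean flags, a linear scan of each $Y\in\BBB$ to detect proper subsets of $X$, marking their elements, and a final pass over $X$ to output the unmarked elements, for $O(m)$ total time. The only cosmetic differences are that you interleave the marking with the subset tests and make the properness check explicit via cardinalities, whereas the paper first collects the subsets and then marks; both yield the same bound and correctness argument.
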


\begin{proof}
We associate with each element $x$ in $\cup \BBB$ a Boolean variable that is true if and only if $x$ is in $X$; setting up these variables takes time $O(m)$. By examining the value for $x$, we may test whether $x$ belongs to $X$ in constant time. For each set $Y\in\BBB$, we use these bits to determine whether $Y\subset X$, by testing each of the members of $Y$, in time $O(|\cup Y|)$. By performing this test for all sets in $\BBB$, we may determine a collection of the subsets of $X$ that are in $\BBB$, in total time $O(m)$. We then associate a second Boolean variable with each member of $X$; initially we set all of these variables to false. For each $Y$ in our collection of subsets of $X$, we loop through the elements of $Y$, and set the Boolean variables associated with each of these elements to true. Finally, we loop through the elements of $X$, and form a list of the elements for which the associated Boolean value remains false. These elements are the endpoints of $X$. The runtime of this algorithm is dominated by the steps in which we find the subsets of $X$ and then use those subsets to mark covered elements of $X$; both of these steps take $O(m)$ total time.
\end{proof}

\begin{theorem}
\label{thm:test if base}
Given a family $\BBB$ of sets, we may determine in time $O(nm)$ whether $\BBB$ is the base of a $\cup$-closed family $\FFF$.
\end{theorem}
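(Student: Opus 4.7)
The plan is to reduce the problem to a sequence of endpoint computations via Lemma \ref{lem:endpoint alg}. First I would invoke Definition \ref{span-base} to observe that $\BBB$ is the base of a $\cup$-closed family at all if and only if it is the base of the (uniquely determined) family $\Span(\BBB)$, which holds exactly when no set in $\BBB$ is the union of \emph{other} sets in $\BBB$; equivalently, by Theorem \ref{base = atoms}, every element of $\BBB$ is an atom of $\Span(\BBB)$.

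The key bridge to the endpoint algorithm is the following simple equivalence for a nonempty $X \in \BBB$: $X$ is an atom of $\Span(\BBB)$ if and only if $X$ has at least one endpoint. If some $x \in X$ lies in no proper subset of $X$ that belongs to $\BBB$, then no union of such proper subsets can cover $x$, and hence no subfamily of $\BBB \setminus \{X\}$ can union to $X$. Conversely, if every $x \in X$ lies in some proper subset $Y_x \subset X$ with $Y_x \in \BBB$, then $X = \bigcup_{x \in X} Y_x$ exhibits $X$ as a union of other sets of $\BBB$, so $X$ is not an atom. The empty set, if present in $\BBB$, is always an atom by the paper's convention that the empty union is disallowed, so it needs no test.

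The algorithm is then immediate: loop over each $X \in \BBB$, call the procedure of Lemma \ref{lem:endpoint alg} on $X$, and declare $\BBB$ to be a base precisely when every nonempty $X$ returns a nonempty list of endpoints. Correctness follows from the characterization above. Each of the $n$ endpoint computations takes $O(m)$ time by Lemma \ref{lem:endpoint alg}, giving a total running time of $O(nm)$.

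The only delicate point, and the one I would state explicitly in the write-up, is the equivalence between \textquotedblleft $X$ is an atom\textquotedblright{} and \textquotedblleft $X$ has an endpoint\textquotedblright; everything else is bookkeeping. No sub-routine more sophisticated than Lemma \ref{lem:endpoint alg} is required, and no auxiliary data structure is needed beyond what that lemma already maintains.
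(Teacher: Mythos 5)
Your proposal is correct and follows essentially the same route as the paper: apply Lemma~\ref{lem:endpoint alg} to each of the $n$ sets, use the equivalence (via Theorem~\ref{base = atoms}) that $\BBB$ is a base of its span exactly when every nonempty member has a nonempty set of endpoints, and conclude the $O(nm)$ bound. The only difference is that you spell out the endpoint--atom equivalence explicitly, which the paper treats as immediate from the definition of endpoints; this is a harmless (indeed welcome) elaboration, not a different argument.
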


\begin{proof}
We use Lemma~\ref{lem:endpoint alg} to calculate the endpoints of each $X\in\BBB$. By definition, $X$ is an atom if and only if it is empty or has a nonempty set of endpoints; thus, by Theorem~\ref{base = atoms}, $\BBB$ is the base of its span if and only if every set in $\BBB$ is either empty or has a nonempty set of endpoints. There are $n$ sets, each of which takes time $O(m)$ to test, so the total time is $O(nm)$.
\end{proof}

\begin{lemma}
\label{lem:wg with empty by endpoints}
Suppose that set family $\BBB$ contains the empty set. Then $\BBB$ is the base of a $\cup$-closed well-graded family if and only if each nonempty $X\in\BBB$ has one endpoint.
\end{lemma}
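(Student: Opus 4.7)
The plan is to reduce the statement to Koppen's theorem (Theorem~\ref{koppen theo}) by translating the notion of ``endpoint'' into the language of the surmise function. By the definition of endpoint, $x$ is an endpoint of $X \in \BBB$ precisely when $X$ is an atom at $x$, i.e., when $X \in \ss(x)$. Hence the number of endpoints of a nonempty $X\in\BBB$ is exactly the number of classes $\ss(x)$ that contain $X$. With this dictionary in hand, ``every nonempty $X\in\BBB$ has exactly one endpoint'' translates verbatim into ``the family $\{\ss(x)\st x\in\cup\FFF\}$ partitions $\check\BBB$.''

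For the necessity direction I would first invoke Theorem~\ref{base = atoms} to note that, since $\BBB$ is a base, $\BBB$ is exactly the collection of atoms of $\FFF=\Span(\BBB)$; in particular every nonempty $X\in\BBB$ is an atom of $\FFF$, so it lies in some $\ss(x)$. Because $\FFF$ is a learning space (well-graded, $\cup$-closed, and containing $\es$), Theorem~\ref{koppen theo} tells us that $\{\ss(x)\st x\in\cup\FFF\}$ is a partition of $\check\BBB$. Translating back via the dictionary above gives that every nonempty $X\in\BBB$ has exactly one endpoint.

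For the sufficiency direction, suppose each nonempty $X\in\BBB$ has exactly one endpoint. In particular each such $X$ has at least one endpoint, so by definition each is an atom at that endpoint. Combined with $\es\in\BBB$, this shows every member of $\BBB$ is an atom of $\Span(\BBB)$. Conversely, any atom $A$ of $\Span(\BBB)$ is the union of some subfamily of $\BBB$, and Theorem~\ref{atom iff} forces $A$ to belong to that subfamily, hence to $\BBB$. Thus $\BBB$ coincides with the collection of atoms of $\Span(\BBB)$, so by Theorem~\ref{base = atoms} it is indeed the base of the $\cup$-closed family $\Span(\BBB)$. Moreover each nonempty $X\in\BBB$ lies in exactly one $\ss(x)$, so $\{\ss(x)\st x\in\cup\FFF\}$ consists of pairwise disjoint classes whose union is $\check\BBB$, i.e., a partition. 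Koppen's theorem then yields well-gradedness of $\Span(\BBB)$.

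There is no real obstacle beyond bookkeeping: the lemma is essentially a reformulation of Theorem~\ref{koppen theo} using the endpoint terminology introduced for the algorithmic section. The only subtlety worth flagging in the write-up is the verification that $\BBB$ really is the base and not merely a spanning family made of atoms; this is handled by the one-line application of Theorem~\ref{atom iff} above, which shows that every atom of $\Span(\BBB)$ must already belong to $\BBB$.
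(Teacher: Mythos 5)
Your proposal is correct and follows essentially the same route as the paper: translate ``endpoint of $X$'' into ``$X$ is an atom at $x$, i.e.\ $X\in\ss(x)$,'' and then reduce both directions to Theorem~\ref{koppen theo}. The only difference is that you make explicit, via Theorems~\ref{atom iff} and~\ref{base = atoms}, the verification that $\BBB$ really is the base of its span in the sufficiency direction, a step the paper's shorter proof leaves implicit.
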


This is closely related to some results of \citet{koppen:98} \citep[see also][Theorem 3.15, Condition (ii)]{doign99}.
\tl

\begin{proof}
If some $X\in\BBB$ has two or more endpoints $x$ and $y$, then $X$ belongs to both $\sigma(x)$ and $\sigma(y)$, so the surmise function $\sigma$ is not a partition of $\check\BBB$. If some nonempty $X$ has no endpoint, it is not an atom and not part of a base. Conversely if every nonempty $X\in\BBB$ has one endpoint, then $\sigma$ partitions $\check\BBB$ according to those endpoints. The result follows from Theorem~\ref{koppen theo}.
\end{proof}

\begin{theorem}
\label{thm:test ls}
Given a family $\BBB$ of sets, we may determine in time $O(nm)$ whether $\BBB$ is the base of a learning space.
\end{theorem}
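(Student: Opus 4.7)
The plan is to reduce directly to Lemma \ref{lem:wg with empty by endpoints}, which characterizes bases of learning spaces by a purely local condition on endpoint counts. The algorithm follows almost verbatim from that lemma: first verify that $\es \in \BBB$; then for each $X \in \BBB$ compute the list of endpoints of $X$ via the algorithm of Lemma \ref{lem:endpoint alg}; finally, accept if and only if each nonempty $X \in \BBB$ has exactly one endpoint.

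For the running time, scanning $\BBB$ to check whether $\es \in \BBB$ costs $O(n)$. Computing the endpoints of a single set $X$ costs $O(m)$ by Lemma \ref{lem:endpoint alg}, so carrying this out for all $n$ sets of $\BBB$ costs $O(nm)$ in total. Once the endpoint lists are in hand, verifying that each nonempty set has precisely one endpoint is linear in the combined sizes of these lists, which is again within the $O(nm)$ budget.

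Correctness is immediate from Lemma \ref{lem:wg with empty by endpoints}, since a learning space is by definition a $\cup$-closed well-graded family containing $\es$. Note that there is no need to separately invoke Theorem \ref{thm:test if base} to confirm that $\BBB$ is a base at all: as soon as every nonempty $X \in \BBB$ has at least one endpoint, each such $X$ is an atom, and Theorem \ref{base = atoms} then guarantees that $\BBB$ is the base of its span. The one real piece of technical content in this line of development sits in Lemma \ref{lem:endpoint alg}; given that subroutine, the present theorem is a routine packaging, with no obstacle beyond the time-bound bookkeeping described above.
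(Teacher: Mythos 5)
Your proposal is correct and follows essentially the same route as the paper: check that $\es\in\BBB$, compute endpoints of every set by Lemma~\ref{lem:endpoint alg} in $O(m)$ time each, and accept iff every nonempty set has exactly one endpoint, with correctness resting on Lemma~\ref{lem:wg with empty by endpoints}. Your side remark that no separate call to Theorem~\ref{thm:test if base} is needed matches the paper, which likewise uses only the endpoint computation and the characterization of Lemma~\ref{lem:wg with empty by endpoints}.
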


\begin{proof}
We first check that $\BBB$ contains the empty set; if not, it cannot be the base of a learning space.
Then, as in Theorem~\ref{thm:test if base}, we apply Lemma~\ref{lem:endpoint alg} to calculate the endpoints of each $X\in\BBB$. By Lemma~\ref{lem:wg with empty by endpoints}, $\BBB$ is the base of a $\cup$-closed well-graded family if and only if each nonempty $X\in\BBB$ has exactly one endpoint. There are $n$ sets, each of which takes time $O(m)$ to test, so the total time is $O(nm)$.
\end{proof}

\begin{definition}
For any set family $\BBB$ and any set $X\in\BBB$, let $\BBB/X$ denote the family of sets $\{Y\setminus X\st Y\in\BBB\}$.
\end{definition}

\begin{lemma}
\label{lem:wg to ls reduction}
Let $\BBB$ be the base of a $\cup$-closed family $\FFF$. Then $\FFF$ is well-graded if and only if, for each $X$ in $\BBB$, the family $\BBB/X$ spans a learning space.
\end{lemma}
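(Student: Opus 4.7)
The plan is to use Theorem~\ref{base => wg} to characterize wellgradedness of $\FFF$ in terms of the existence of tight paths from $K$ to $K\cup L$ (or from $\es$ to $K$, in the learning-space case), and then to translate between tight paths in $\FFF$ through sets containing a fixed $X$ and tight paths in $\Span(\BBB/X)$ via the map $S\mapsto S\setminus X$. Two preliminary observations drive the translation. First,
$\Span(\BBB/X)=\{S\setminus X\st S\in\FFF\}$,
because $\cup_{Y\in\AAA}(Y\setminus X)=(\cup\AAA)\setminus X$; in particular $\es\in\Span(\BBB/X)$ since $X\setminus X=\es$. Second, a tight path $S_0,\ldots,S_n$ of length $n=|S_0\triangle S_n|$ toggles each element of $S_0\triangle S_n$ exactly once and toggles no other element, so whenever $X\SB S_0\cap S_n$ every intermediate $S_i$ also contains $X$.

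For the necessity direction, I would assume $\FFF$ is well-graded and fix $X\in\BBB$. For distinct $U,V\in\Span(\BBB/X)$ (both necessarily disjoint from $X$), the sets $U\cup X$ and $V\cup X$ lie in $\FFF$ and both contain $X$. Any tight path between them in $\FFF$ therefore passes only through sets containing $X$, so subtracting $X$ pointwise yields a sequence in $\Span(\BBB/X)$ whose consecutive terms differ in exactly one element and whose length is $|(U\cup X)\triangle(V\cup X)|=|U\triangle V|$. Since $\Span(\BBB/X)$ is $\cup$-closed and contains $\es$, this makes it a learning space.

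For the sufficiency direction, by the first statement of Theorem~\ref{base => wg} it is enough to produce, for each pair of distinct $K,L\in\BBB$, a tight path in $\FFF$ from $K$ to $K\cup L$. I apply the hypothesis with $X=K$: the set $L\setminus K$ lies in $\BBB/K\SB\Span(\BBB/K)$, which is a well-graded family containing $\es$, so there is a tight path $\es=T_0,\ldots,T_m=L\setminus K$ in it. Writing each $T_i=(\cup\AAA_i)\setminus K$ with $\AAA_i\SB\BBB$ (taking $\AAA_0=\{K\}$ to realize $T_0=\es$), the sets $S_i=T_i\cup K=(\cup\AAA_i)\cup K=\cup(\AAA_i\cup\{K\})$ all lie in $\FFF$; since each $T_i$ is disjoint from $K$ and consecutive $T_i$ differ by a single element, the sequence $K=S_0,\ldots,S_m=K\cup L$ is the required tight path in $\FFF$.

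The main conceptual step, and the only one that requires more than bookkeeping, is the elementary observation that a tight path between two sets containing $X$ stays entirely within the ``layer'' of sets containing $X$. Once this is in hand, the correspondence $S\leftrightarrow S\setminus X$ is an isomorphism of tight-path structures between the subfamily of $\FFF$ sitting above $X$ and $\Span(\BBB/X)$, and both directions of the lemma reduce to a single application of Theorem~\ref{base => wg}.
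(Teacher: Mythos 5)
Your proposal is correct and follows essentially the same route as the paper: the correspondence $S\mapsto S\setminus X$ (and its inverse $T\mapsto T\cup X$) between tight paths in $\FFF$ lying above $X$ and tight paths from $\es$ in $\Span(\BBB/X)$, combined with Theorem~\ref{base => wg}. Your write-up merely makes explicit the points the paper leaves implicit, namely that $\Span(\BBB/X)=\{S\setminus X\st S\in\FFF\}$ and that a tight path between two sets containing $X$ never leaves the layer of sets containing $X$.
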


\begin{proof}
A tight path in $\FFF$ from $X$ to some set $Y\supset X$ corresponds (via set-theoretic difference of each path member with $X$) to a tight path in $\FFF/X$ from the empty set to $Y\setminus X$. Conversely, a tight path in $\FFF/X$ from the empty set to $Y\setminus X$ corresponds (via set-theoretic union of each path member with $X$) to a tight path in $\FFF$ from $X$ to $Y$. The result follows from Theorem~\ref{base => wg}.
\end{proof}

\begin{theorem}
\label{thm:test wg base}
Given a family $\BBB$ of sets, we may determine in time $O(n^2m)$ whether $\BBB$ is the base of a $\cup$-closed well-graded family.
\end{theorem}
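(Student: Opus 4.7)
The plan is to invoke Lemma~\ref{lem:wg to ls reduction}, which asserts that $\Span(\BBB)$ is well-graded if and only if $\BBB$ is the base of a $\cup$-closed family and, for every $X\in\BBB$, the derived family $\BBB/X$ spans a learning space. This reduces the task to one application of Theorem~\ref{thm:test if base} together with $n$ separate learning-space tests, one per element of $\BBB$.

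The algorithm I would implement proceeds as follows. First, apply Theorem~\ref{thm:test if base} in $O(nm)$ time to verify that $\BBB$ is the base of a $\cup$-closed family; if it is not, report failure. Otherwise, loop over each $X\in\BBB$: in $O(m)$ time build $\BBB/X$ by subtracting $X$ from each $Y\in\BBB$, using a Boolean membership array for $X$ exactly as in the proof of Lemma~\ref{lem:endpoint alg}. Since $\es = X\setminus X$ lies in $\BBB/X$, the empty set is automatically present. I would then test whether $\Span(\BBB/X)$ is a learning space, as described below, in $O(nm)$ time per $X$; summing across the $n$ choices of $X$ yields the claimed total of $O(n^2m)$.

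The main obstacle is that Theorem~\ref{thm:test ls} cannot be applied to $\BBB/X$ verbatim, because $\BBB/X$ need not itself be the base of its own span: distinct $Y,Z\in\BBB$ may satisfy $Y\setminus X = Z\setminus X$, and some $Y\setminus X$ can be expressible as a union of other members of $\BBB/X$. To cope, I would use Lemma~\ref{lem:endpoint alg} once for each set in $\BBB/X$ to compute its endpoints with respect to $\BBB/X$, at cost $O(m)$ apiece. By Theorem~\ref{base = atoms}, the base of $\Span(\BBB/X)$ consists of $\es$ together with those members of $\BBB/X$ having at least one endpoint; moreover, for such a set the endpoints with respect to $\BBB/X$ coincide with the endpoints with respect to this extracted base, because any covering proper subset drawn from $\BBB/X$ can itself be decomposed into proper subsets from the base. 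Lemma~\ref{lem:wg with empty by endpoints} then says $\Span(\BBB/X)$ is a learning space if and only if every nonempty atom has exactly one endpoint, equivalently, if and only if no member of $\BBB/X$ has two or more endpoints. Checking this takes an additional $O(m)$, so the cost per $X$ is $O(nm)$, and correctness follows immediately from Lemma~\ref{lem:wg to ls reduction}.
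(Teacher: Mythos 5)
Your proposal is correct and follows essentially the same route as the paper: test base-hood via Theorem~\ref{thm:test if base}, then for each $X\in\BBB$ reduce to a learning-space test on $\BBB/X$ via Lemma~\ref{lem:wg to ls reduction}, with an endpoint computation per set giving the $O(n^2m)$ bound. The only (cosmetic) difference is that the paper extracts the base $\BBB_X$ of $\Span(\BBB/X)$ and re-invokes Theorem~\ref{thm:test ls}, whereas you check endpoints with respect to $\BBB/X$ directly and correctly justify, via Theorems~\ref{atom iff} and~\ref{base = atoms}, that this is equivalent.
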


\begin{proof}
We may first test whether $\BBB$ is a base by Theorem~\ref{thm:test if base}.
Next, for each $X\in\BBB$, we form the set $\BBB_X$ consisting of the empty set and the sets in $\BBB/X$ that have a nonempty set of endpoints with respect to $\BBB/X$, and test whether $\BBB_X$ is the base of a learning space by Theorem~\ref{thm:test ls}. $\BBB$ itself is the base of a $\cup$-closed well-graded family if and only if each $\BBB_X$ passes this test, by Lemma~\ref{lem:wg to ls reduction}. There are $n$ sets $\BBB_X$, each takes time $O(nm)$ to construct and test, and so the total time bound is $O(n^2m)$.
\end{proof}

We now consider the situation in which $\BBB$ is not itself the base of a well graded family. Can we modify $\BBB$ to produce a well graded family that is as close as possible, in some sense, to the span of $\BBB$?

\begin{definition}
A \emph{minimal well-graded extension} of a family of sets $\BBB$ is a well-graded $\cup$-closed set family $\FFF$ such that $\BBB\subset\FFF$, and such that no $\cup$-closed $\FFF'$ with $\BBB\subset\FFF'\subset\FFF$ is well-graded. A \emph{path family} for a family of sets $\BBB$ is a set  paths $\pi_{K,L}$ with $K$ and $L$ in $\BBB$; $\pi_{K,L}$ may use sets not belonging to the span of $\BBB$, but is required to be a tight path in the power set of $\cup\BBB$. We observe that the length of a path $\pi_{K,L}$ is at most the cardinality of $L$, and therefore that the total length of all paths in a path family is $O(nm)$. A \emph{path extension} $\FFF$ of $\BBB$ is formed from a path family by letting $\BBB'$ consist of all the sets occurring on paths $\pi_{K,L}$ and letting $\FFF$ be the span of $\BBB'$.
\end{definition}

\begin{lemma}
\label{lem:path extension wg}
Any path extension is well-graded.
\end{lemma}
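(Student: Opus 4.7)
The plan is to apply Theorem~\ref{base => wg}. Since $\FFF=\Span(\BBB')$ is $\cup$-closed by construction, it suffices to exhibit, for any two distinct sets $M,N$ in the base $\BBB^*$ of $\FFF$, a tight path in $\FFF$ from $M$ to $M\cup N$. Because $\BBB'$ spans $\FFF$, the base satisfies $\BBB^*\SB\BBB'$, so each of $M$ and $N$ occurs on some path in the path family. Fix a path $\pi_{K_M,L_M}$ passing through $M$ and a path $\pi_{K_N,L_N}$ passing through $N$. Each $\pi_{K,L}$ is a tight path of length $|L\setminus K|$ ascending from $K$ to $K\cup L$, so the sets along it form a chain; in particular $K_M\SB M$ and $K_N\SB N$, with $K_M,K_N\in\BBB$.

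I build the tight path from $M$ to $M\cup N$ in two stages, piggybacking on existing paths in the spirit of the proof of Lemma~\ref{span is wg}. Stage one uses $\pi_{K_M,K_N}$: writing its sets as $D_0=K_M,D_1,\ldots,D_s=K_M\cup K_N$, I form the sequence $M\cup D_0=M$, $M\cup D_1,\ldots,M\cup D_s=M\cup K_N$ in $\FFF$ (each term is a union of members of $\BBB'$). Consecutive terms differ by 0 or 1 element, and after removing duplicates this is a tight path in $\FFF$ from $M$ to $M\cup K_N$ of length $|K_N\setminus M|$. Stage two similarly uses the prefix $C_0=K_N,C_1,\ldots,C_t=N$ of $\pi_{K_N,L_N}$ to produce a tight path $M\cup C_0=M\cup K_N, M\cup C_1,\ldots,M\cup C_t=M\cup N$ in $\FFF$, of length $|N\setminus(M\cup K_N)|$.

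Concatenating the two tight paths yields a sequence in $\FFF$ from $M$ to $M\cup N$ of total length $|K_N\setminus M|+|N\setminus(M\cup K_N)|=|N\setminus M|=d(M,M\cup N)$, hence a tight path. Theorem~\ref{base => wg} then gives that $\FFF$ is well-graded. The main step to verify carefully is that every intermediate union $M\cup D_i$ or $M\cup C_j$ actually lies in $\FFF$; this holds because $\FFF$ is $\cup$-closed and contains both $M$ (as $M\in\BBB'$) and each $D_i,C_j\in\BBB'$. The edge case $M=\es$ deserves a brief check: then $\es\in\FFF$ forces $\es\in\BBB'$, and since the only way $\es$ can appear on a path is as the starting endpoint $K$, we have $\es\in\BBB$, so the same argument applies with $K_M=\es$.
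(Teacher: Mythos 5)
Your proof is correct and follows essentially the same route as the paper's: both lift paths of the path family into $\FFF$ by taking unions with a fixed set, concatenate the resulting segments (discarding duplicates), and conclude via Theorem~\ref{base => wg}. Your two-stage concatenation (through $M\cup K_N$, using $\pi_{K_M,K_N}$ and the prefix of $\pi_{K_N,L_N}$ up to $N$) matches the paper's segments from $K$ to $K\cup C$ and from $K\cup C$ to $K\cup L$, merely omitting the paper's initial (and inessential) segment along $\pi_{A,B}$ and restricting attention to pairs in the base rather than all of $\BBB'$.
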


\begin{proof}
We show that, for every $K$ and $L$ in $\BBB'$, where $\BBB'$ is the family of sets occurring on paths $\pi_{X,Y}$ in a path extension of $\BBB$, that there exists a tight path in the span of $\BBB'$ from $K$ to $K\cup L$.

Thus, suppose $K$ belongs to a path $\pi_{A,B}$ and $L$ belongs to a path $\pi_{C,D}$. To form a tight path from $K$ to $K\cup L$ in $\FFF$, we concatenate the following three paths:
\begin{enumerate}
\item a tight path from $A$ to $K$ along path $\pi_{A,B}$,
\item a tight path from $K$ to $K\cup C$, formed by the union of $K$ with the sets in path $\pi_{A,C}$, and
\item a tight path from $K\cup C$ to $K\cup L$, formed by the union of $K\cup C$ with the sets in the portion of path $\pi_{C,D}$ that extends from $C$ to $L$.
\end{enumerate}
When this concatenation would cause the same set to appear repeatedly, we discard the duplicate sets.
It is straightforward to verify that each set in this concatenation of paths belongs to the span of $\BBB'$. Thus, we can form a tight path from any $K$ to $K\cup L$ in this span, and therefore, by Theorem~\ref{base => wg}, the span is well-graded.
\end{proof}

\begin{lemma}
\label{lem:minimal-is-path}
Any minimal well-graded extension is a path extension.
\end{lemma}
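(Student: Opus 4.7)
The plan is to construct, from a minimal well-graded extension $\FFF$ of $\BBB$, a path family whose path extension recovers $\FFF$ exactly. For each ordered pair $(K,L)\in \BBB\times\BBB$, the union $K\cup L$ lies in $\FFF$ because $\FFF$ is $\cup$-closed, and since $\FFF$ is well-graded a tight path from $K$ to $K\cup L$ exists inside $\FFF$ (this is the content of Lemma~\ref{span is wg} together with Theorem~\ref{base => wg}). Such a path automatically has length $|L\setminus K|$, so every step adds one element and every intermediate set sits between $K$ and $K\cup L$. Choose one such path and call it $\pi_{K,L}$; this gives a path family in the intended sense, matching the convention tacitly used in the proof of Lemma~\ref{lem:path extension wg} and the length bound $|L|$ recorded earlier.

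Let $\BBB'$ denote the collection of all sets appearing on any of these paths, and set $\FFF' = \Span(\BBB')$. I would then verify three containments. First, $\BBB\subseteq\BBB'$, since each $K\in\BBB$ is the initial set of $\pi_{K,L}$ for any $L$. Second, $\BBB'\subseteq\FFF$, because each $\pi_{K,L}$ was drawn from $\FFF$; combined with $\cup$-closedness of $\FFF$ this upgrades to $\FFF' = \Span(\BBB')\subseteq\FFF$. Third, $\FFF'$ is $\cup$-closed by definition of the span and well-graded by Lemma~\ref{lem:path extension wg}. So $\FFF'$ is a $\cup$-closed well-graded family satisfying $\BBB\subseteq\FFF'\subseteq\FFF$.

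Now invoke minimality. The definition of minimal well-graded extension forbids any $\cup$-closed well-graded family strictly between $\BBB$ and $\FFF$, so $\FFF'$ can only be $\FFF$ itself. (If $\BBB$ is not already $\cup$-closed, then $\FFF'$ is automatically strictly larger than $\BBB$ since $\FFF'$ is $\cup$-closed; if $\BBB$ is $\cup$-closed and well-graded then $\FFF = \BBB$ and $\FFF' = \BBB = \FFF$ trivially.) This exhibits $\FFF$ as the path extension determined by the path family $\{\pi_{K,L}\}_{(K,L)\in\BBB\times\BBB}$, which is exactly the conclusion. There is no real obstacle: the argument is a short bookkeeping combination of Lemma~\ref{lem:path extension wg}, the existence of tight paths of the form $K\to K\cup L$ in a $\cup$-closed wg family, and the minimality clause. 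The only care needed is to fix the meaning of $\pi_{K,L}$ as the tight path from $K$ to $K\cup L$ (rather than from $K$ to $L$), which is forced by the length bound and by how the notation is consumed in the proof of Lemma~\ref{lem:path extension wg}.
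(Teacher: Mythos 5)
Your proposal is correct and follows essentially the same route as the paper: choose, for each pair $K,L\in\BBB$, a tight path from $K$ to $K\cup L$ inside the minimal extension $\FFF$ (possible since $\FFF$ is well-graded and $\cup$-closed), note that the resulting path extension is well-graded by Lemma~\ref{lem:path extension wg}, contains $\BBB$, and is a subfamily of $\FFF$, and conclude by minimality that it equals $\FFF$. Your extra care in fixing the convention that $\pi_{K,L}$ runs from $K$ to $K\cup L$ is a reasonable clarification of the paper's definition but does not change the argument.
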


\begin{proof}
Let $\FFF$ be a minimal well-graded extension of $\BBB$. Then by Theorem~\ref{base => wg} we can find a path family for $\BBB$, such that each set occurring in each path belongs to $\FFF$.  By Lemma~\ref{lem:path extension wg}, the corresponding path extension is well-graded, and it is a subfamily of $\FFF$ and contains every set in $\BBB$. By the minimality of $\FFF$, this path extension must coincide with $\FFF$.
\end{proof}

It is straightforward to combine the results above in an algorithm that finds a minimal well-graded extension of any set family in polynomial time: construct a path family arbitrarily, and then for each set in its base, determine whether the set can be removed by testing the result of the removal for well-gradedness, using Theorem~\ref{thm:test wg base} to do these tests. However the polynomial time bound of this algorithm would be large. We now describe a more efficient algorithm for the same task, based on a more careful choice of path family.

\begin{theorem}
\label{thm:minimal-alg}
Given any family of sets $\BBB$, we can find a minimal well-graded extension of $\BBB$ in time $O(nm\ell+n^3m)$.
\end{theorem}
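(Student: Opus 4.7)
My plan is to first construct a full path family (certifying well-gradedness via Lemma~\ref{lem:path extension wg}) in time $O(nm\ell)$, and then, in a second pass, prune it to minimality in time $O(n^3m)$ by exploiting the characterization of minimal extensions as path extensions provided by Lemma~\ref{lem:minimal-is-path}.

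\emph{Phase 1 (build an initial well-graded extension).} For each ordered pair $(K,L)$ of distinct sets in $\BBB$, I construct a tight path $\pi_{K,L}$ from $K$ to $K\cup L$ in the power set of $\cup\BBB$ by inserting the elements of $L\setminus K$ one at a time in an arbitrary fixed order. Using a Boolean indicator array on $\cup\BBB$ to represent the current set (as in Lemma~\ref{lem:endpoint alg}), each such path is built in $O(|L|\cdot\ell)$ time. Since the length of each path is at most $|L|$, the total number of intermediate sets occurring on paths is at most $\sum_{K,L}|L|=nm$, and each has size $O(\ell)$; hence the whole family of paths is built and stored in $O(nm\ell)$ time. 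By Lemma~\ref{lem:path extension wg}, the span of $\BBB$ together with these intermediate sets is a well-graded $\cup$-closed extension $\FFF_0$ of $\BBB$.

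\emph{Phase 2 (prune to a minimal extension).} By Lemma~\ref{lem:minimal-is-path}, every minimal well-graded extension is itself a path extension, so it suffices to find a path family whose extension cannot be further shrunk. I iterate over the $n(n-1)$ pairs $(K,L)$; for each one I ask whether the currently stored path $\pi_{K,L}$ can be replaced by a tight path from $K$ to $K\cup L$ lying entirely inside the span of the other retained paths together with $\BBB$. This amounts to a reachability test in the interval $[K,K\cup L]$ of the current span, restricted to sets supported by paths other than $\pi_{K,L}$, and can be answered in $O(nm)$ time by the same bookkeeping used in Theorem~\ref{thm:test wg base} (scanning the at most $O(n)$ currently-recorded base sets and, for each, testing containment in $O(m)$ time). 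When such a replacement exists, I swap it in and mark the freed intermediate sets of $\pi_{K,L}$ for deletion, updating, for each remaining set, a list of the paths that use it. Summing over the $O(n^2)$ pairs gives $O(n^3m)$ time for Phase 2; the two phases together yield the stated bound.

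\emph{Main obstacle.} The main difficulty is correctness of the pruning pass: one must show that once every pair $(K,L)$ has been considered and no further rerouting is possible, the resulting path extension is minimal. This uses Lemma~\ref{lem:minimal-is-path} in both directions --- a minimal extension is forced to be a path extension, and conversely a path extension in which every path is indispensable (i.e.\ cannot be replaced by a tight path built from the other paths and $\BBB$) is minimal, since the removal of any set would sever the tight path required by Theorem~\ref{base => wg} for some pair in $\BBB\times\BBB$. A secondary, implementation-level obstacle is maintaining, under insertions and deletions of intermediate sets, the "which paths use which set" auxiliary structure so that both the reachability test and the deletion of freed sets fit in the $O(nm)$-per-pair budget; this is done by storing reverse pointers from each intermediate set to the (at most $n^2$) paths traversing it and updating them incrementally.
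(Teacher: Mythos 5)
Your proposal takes the build-then-prune route that the paper explicitly mentions and sets aside (``construct a path family arbitrarily, and then \ldots test the result of the removal for well-gradedness \ldots However the polynomial time bound of this algorithm would be large''), whereas the paper's actual proof avoids pruning altogether: it constructs the extension bottom-up by cardinality, maintaining the sets $S_{K,L}$ and Boolean indicators $c_{A,B,C,D}$, grouping the active paths into equivalence classes ($S_{A,B}=S_{C,D}$) and, within each class, choosing a \emph{minimal dominating set} in a bipartite graph so that only sets indispensable for well-gradedness are ever added; minimality then holds by construction and the amortized analysis gives $O(nm\ell+n^3m)$. Your Phase~2 has two genuine gaps. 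First, correctness: you need the converse you state in your ``main obstacle'' paragraph --- that a path extension in which no single path $\pi_{K,L}$ can be rerouted inside $\Span\bigl(\BBB\cup\{\text{sets of the other paths}\}\bigr)$ is minimal --- but Lemma~\ref{lem:minimal-is-path} gives only the forward direction, and the converse does not follow: a proper well-graded $\cup$-closed subfamily $\FFF'\supseteq\BBB$ of your current span does contain a tight path from $K$ to $K\cup L$ for every pair, but that path lives in $\FFF'\subseteq\Span(\BBB')$, whose sets may only be expressible as unions involving intermediate sets private to $\pi_{K,L}$ itself; so your per-path test can fail for every pair while a smaller extension still exists (non-minimality may require rerouting several paths simultaneously, or dropping a set that your one-pass, one-path-at-a-time local search never frees). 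This sufficiency claim is exactly what is missing and is not supplied.

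Second, the running-time accounting does not support the stated bound. After Phase~1 the retained family $\BBB'$ has $\Theta(nm)$ generating sets of total size $\Theta(nm\ell)$, not ``at most $O(n)$ currently-recorded base sets,'' so the claimed $O(nm)$-per-pair test is unjustified; moreover, deciding whether a tight path from $K$ to $K\cup L$ exists inside the span of a given (large) family is not a simple containment scan --- the only tool the paper provides is the well-gradedness test of Theorem~\ref{thm:test wg base}, which on the enlarged family costs a polynomial in the primed parameters $n'=O(nm)$, $m'=O(nm\ell)$, which is precisely why the paper rejects the prune-after-the-fact strategy and designs the cardinality-by-cardinality algorithm with the dominating-set selection. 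To salvage your approach you would have to (i) prove the missing ``locally indispensable $\Rightarrow$ minimal'' implication (or replace path-rerouting by genuine set-removal tests), and (ii) give a per-test procedure whose cost is bounded in terms of $n,m,\ell$ of the \emph{input}, neither of which the proposal currently does.
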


\begin{proof}
We simultaneously form the paths $\pi_{K,L}$ in a path family, and a superset $\BBB'$ of $\BBB$ that includes a base for our eventual well-graded extension, by adding sets in order by the cardinality of the sets. At step $i$ of the process, we include sets of cardinality $i$ into $\BBB'$, taking care as we do that all the sets we add are necessary for well-gradedness.  As we do so, we maintain the following data:
\begin{itemize}
\item $S_{K,L}$ is the union of all sets $X\in\BBB'$ such that $X\subset K\cup L$ and $X\cup K\ne K\cup L$.
\item $c_{A,B,C,D}$ is a Boolean value, true if and only if $S_{A,B}\subset C\cup D$.
\end{itemize}

In the $i$th step of the algorithm, we consider the set $\Pi_i$ of all paths $\pi_{K,L}$ such that $|S_{K,L}|=i-1$ and such that $|K\cup L|>i$. We will add sets of cardinality $i$ to $\BBB'$, of the form $S_{K,L}\cup\{x\}$ for some $x$ in $(K\cup L)\setminus S_{K,L}$, in order to allow one more step on each path. However, note that if $S_{A,B}=S_{C,D}$ then a single set of this type may allow an additional step for multiple paths.

We observe that, if $\pi_{A,B}$ and $\pi_{C,D}$ are both in $\Pi_i$, then $c_{A,B,C,D}$ is true if and only if $S_{A,B}=S_{C,D}$. Thus the relation $c$ can be viewed as an equivalence relation on the paths in $\Pi_i$. As part of our calculation in step $i$ of the algorithm, we construct the equivalence classes of this equivalence relation.

For each equivalence class, we form a bipartite graph $(U,V,E)$. Here $U$ consists of pairs $(K,L)$ corresponding to paths $\pi_{K,L}$ in the equivalence class.  $V$ consists of elements $x$ in the sets $(K\cup L)\setminus S_{K,L}$, for paths $\pi_{K,L}$ in the equivalence class. We draw an edge from $(K,L)$ to $x$ if $x\in (K\cup L)\setminus S_{K,L}$. We find a minimal subset of $V$ that dominates every vertex of $U$ in this graph; this gives us a minimal family of sets that we can add to $\BBB'$ in order to take another step on each path in the equivalence class. This minimal dominating set can be found by repeatedly either including in it a vertex in $V$ that is the only neighbor of some vertex in $U$ or, if no such vertex in $U$ exists, removing from the graph an arbitrarily chosen vertex in $V$; the total time to perform this step is proportional to the size of the graph.

Once we have found these sets to add to $\BBB'$, we must update the data we are maintaining so that we may repeat this computation for a larger value of $i$. Whenever we add a set corresponding to a path $\pi_{A,B}$ in $\Pi_i$ and element $x$, we examine all paths $\pi_{C,D}$ for which $c_{A,B,C,D}$ is true. If $x\in C\cup D$, we include $x$ as a new member of $S_{C,D}$. (In particular, $c_{A,B,A,B}$ will always be true, and we will always include $x$ as a new member of $S_{A,B}$.) However, if $x\notin C\cup D$, we instead set $c_{A,B,C,D}$ to false.

We now analyze the running time of this algorithm:
\begin{itemize}
\item We may compute the initial value of each set $S_{K,L}$ in time $O(m)$, simply by testing each other set $X$ in time $O(|X|)$, after an initial $O(|K|+|L|)$ time preprocessing stage to construct data structures for testing membership in $K\cup L$. and $L\setminus K$. Thus, we may construct all such sets in time $O(n^2m)$.
\item We may compute the initial value of $c_{A,B,C,D}$ in time $O(|A|+|B|+|C|+|D|)$. Adding this up over all quadruples $A,B,C,D$ produces a runtime of $O(n^3m)$.
\item Identifying $\Pi_i$ takes time $O(n^2)$. There are $O(m)$ steps of the algorithm, so the total time for this identification is $O(n^2m)$.
\item The sum of the cardinalities of the sets $\Pi_i$, summed over all $i$, is $O(nm)$, because each time we include a set in $\Pi_i$ we take a step on the corresponding path, and the total length of all paths in a path family is $O(nm)$. We may identify the equivalence class of a single path in time $O(n^2)$; therefore, the total time to construct equivalence classes, throughout the course of the algorithm, is $O(n^3m)$.
\item Each vertex in $U$ in the bipartite graph constructed for an equivalence class may have $O(\ell)$ neighbors. Therefore, the total size of all bipartite graphs so constructed, and the total time to find dominating sets in these graphs, is $O(nm\ell)$.
\item Each set added to $\BBB'$ can be constructed explicitly, as a list of elements, from the data structures we already have, in time $O(\ell)$. Thus, the total time to list all these sets is $O(nm\ell)$. In addition, for each such set, we spend $O(n^2)$ time examining the paths for which $c_{A,B,C,D}$ is true, the total for which over the course of the algorithm is $O(n^3m)$.
\end{itemize}
Thus, the total time for all of these steps is $O(nm\ell+n^3m)$.
\end{proof}

It may be seen as a flaw in the completion algorithm described above that not every set in the input family $\BBB$ is necessarily part of a base of the output family $\FFF$ it produces.  Given $\BBB$, can we find a wg-family $\FFF$ such 
that every set in $\BBB$ is part of the base of $\FFF$? Unfortunately, as we now show, this 
problem appears to be intractable.

\begin{theorem}
It is NP-complete, given a set family $\BBB$, to determine whether there exists a well-graded $\cup$-closed set family $\FFF$ such that $\BBB$ is a subset of the base of $\FFF$.
\end{theorem}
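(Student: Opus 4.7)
The plan is to establish both membership in NP and NP-hardness.

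For membership in NP, I propose a certificate of polynomial size consisting of an element $e_X \in X$ for each $X \in \BBB$ together with a path family $\{\pi_{K,L} \st K, L \in \BBB,\ K \ne L\}$, where each $\pi_{K,L}$ is a tight path from $K$ to $K \cup L$ in the power set of $\cup \BBB$. The total size is $O(nm)$, since each path has length at most $\ell$. To verify, I would let $\BBB'$ be the collection of all sets appearing on any path (together with $\BBB$ itself) and set $\FFF = \Span(\BBB')$. In polynomial time, one checks that every path is tight with the claimed endpoints and that, for each $X \in \BBB$ and each $Z \in \BBB'$ with $Z \subsetneq X$, $e_X \notin Z$. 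By Lemma \ref{lem:path extension wg}, $\FFF$ is well-graded, and the second check ensures that no union of $\FFF$-sets strictly contained in $X$ can cover $e_X$, so by Theorem \ref{atom iff} each $X \in \BBB$ is an atom of $\FFF$ and thus belongs to its base. Conversely, any well-graded $\cup$-closed $\FFF$ with $\BBB$ in its base supplies witnesses $e_X$ from atomicity and tight paths from wellgradedness, yielding the required certificate.

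For NP-hardness, I plan a reduction from a standard NP-complete problem, most naturally 3-SAT. Given a formula $\phi$ with variables $x_1,\ldots,x_n$ and clauses $C_1,\ldots,C_m$, the construction would introduce ground-set elements $\ell_i, \bar\ell_i$ for each variable together with auxiliary elements for each clause and for each consistency check, then define polynomially many sets in $\BBB$ organized into variable gadgets (whose endpoint choices encode truth values) and clause gadgets (whose candidate atomicity witnesses are exactly the three literals of the clause). The gadgets would be engineered so that a well-graded $\cup$-closed extension with $\BBB$ in its base exists if and only if $\phi$ is satisfiable: a satisfying assignment determines consistent endpoint choices, which permit tight paths among base sets that do not destroy the atomicity of any $X \in \BBB$; an unsatisfied clause, by contrast, would force every tight path between certain base sets to pass through an intermediate set covering the only remaining candidate endpoint of some clause-gadget set, preventing that set from remaining an atom.

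The main obstacle will be the gadget design. The wellgradedness requirement forces tight paths between every pair of base sets, and each intermediate set on such a path may incidentally cover elements of other base sets and break their atomicity. The clause and consistency gadgets must therefore be crafted so that the combinatorial constraints imposed by routing every required tight path while preserving every atom coincide exactly with satisfying the clauses of $\phi$. The delicate part will be verifying both directions of this equivalence, and in particular confirming that no clever choice of auxiliary sets added to $\FFF$ can salvage an unsatisfiable instance.
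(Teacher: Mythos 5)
Your membership-in-NP argument is sound and essentially the paper's: a minimal witness family is a path extension (Lemma~\ref{lem:minimal-is-path}), so a path family of total size $O(nm)$, together with endpoint witnesses $e_X$, can be guessed and verified in polynomial time, and your check that $e_X\notin Z$ for every $Z\in\BBB'$ with $Z\subsetneq X$ does certify, via Theorem~\ref{atom iff} and Lemma~\ref{lem:path extension wg}, that each $X\in\BBB$ remains an atom of the span. The genuine gap is the hardness half: you announce a reduction from 3-SAT but construct nothing. No gadgets are specified, no family $\BBB$ is defined, and neither direction of the claimed equivalence is argued; indeed you explicitly list the gadget design and both verifications as open obstacles. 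Since NP-hardness is the substantive content of the theorem (membership in NP is routine once the path-extension machinery is in place), the proposal as written does not prove the statement.

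For comparison, the paper's reduction needs no auxiliary ``consistency'' elements at all: the ground set is $V\cup\bar V\cup C$, and $\BBB$ consists of $\es$, the set $\{v,\bar v\}$ for each variable $v$, and the two sets $\{c\}$ and $\{c,u,v,w\}$ for each clause $c$ on terms $u,v,w$. The mechanism is exactly the one you gesture at: any tight path from $\es$ to $\{v,\bar v\}$ forces at least one of the singletons $\{v\},\{\bar v\}$ into $\FFF$, while atomicity of $\{v,\bar v\}$ forbids both, so the choice encodes a truth assignment; and $\{c,u,v,w\}$ retains an endpoint precisely when not all of $\{u\},\{v\},\{w\}$ lie in $\FFF$, i.e., when the clause is satisfied. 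Even the easy direction requires exhibiting an explicit well-graded extension: the paper adds the singletons of the false terms together with the sets $\{c,t_0(c)\}$ and $\{c,t_0(c),t_1(c)\}$ (where $t_2(c)$ is a satisfied term of $c$) and verifies tight paths from $\es$ to every generator via Theorem~\ref{base => wg} and Lemma~\ref{lem:wg with empty by endpoints}. Until you supply gadgets at this level of concreteness and verify both implications, the reduction, and hence the theorem, remains unproved.
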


\begin{proof}
If an $\FFF$ satisfying this requirement exists, we can choose $\FFF$ to be minimal and therefore, by Lemma~\ref{lem:minimal-is-path}, a path extension. Thus, we can test in NP whether $\FFF$ exists by nondeterministically choosing a path extension, applying Lemma~\ref{lem:endpoint alg} to find the endpoints of all sets included on paths in the extension, and verifying that each member of $\BBB$ has an endpoint. Therefore, determining whether $\FFF$ exists belongs to NP, the easier part of proving that it is NP-complete.

To finish the NP-completeness proof, we reduce the problem from a known NP-complete problem, 3-satisfiability \citep{GareyJohnson}. A 3-satisfiability instance consists of sets of \emph{variables} $V$, complements of variables $\bar V=\{\bar v\st v\in V\}$, and a set $C$ of clauses, where each clause is a set of three terms, and where a term is any element of $V\cup\bar V$.
A truth assignment is any function $f$ from $V$ to $\{0,1\}$; we may extend $f$ to the domain $V\cup\bar V$ by $f(\bar v)=1-f(v)$. A truth assignment is \emph{satisfying} if each clause of $C$ contains at least one term mapped by $f$ to $1$, and a 3-satisfiability instance is \emph{satisfiable} if and only if it has a satisfying assignment.

From a 3-satisfiability instance $(V,\bar V,C)$ we form a set family $\BBB$, the ground set of which will be the terms and clauses of the instance: $\cup\BBB=V\cup\bar V\cup C$. For each variable $v\in V$, we include in $\BBB$ the set $\{v,\bar v\}$, and for each clause $c$ corresponding to the conjunction of three terms $u$, $v$, and $w$ we include in $\BBB$ two sets, $\{c\}$ and $\{c,u,v,w\}$. Additionally, we include in $\BBB$ the empty set. As we now show, the resulting set $\BBB$ forms a subset of the base of a well-graded $\cup$-closed set family $\FFF$, if and only if the given 3-satisfiability instance is satisfiable.

In one direction, suppose we have a satisfying truth assignment $f$.
Let $\HHH=\{\{x\}\st x\in V\cup\bar V\mbox{ and }f(x)=0\}$. Let $t_i(c)$, for $i\in\{0,1,2\}$ map clauses to terms in such a way that $c=\{t_0(c),t_1(c),t_2(c)\}$ and $f(t_2(c))=1$. Let $T_0=\{\{c,t_0(c)\}\st c\in C\}$ and $T_1=\{\{c,t_0(c),t_1(c)\}\st c\in C\}$.
We let $\FFF$ be the span of $\HHH\cup\BBB\cup T_0\cup T_1$.
For each set $\{v,\bar v\}\in\BBB$ in there is a tight path in $\FFF$ from the empty set through $\{v\}$ or $\{\bar v\}$ respectively as $v$ or $\bar v$ is mapped by $f$ to $0$; the element of $\{v,\bar v\}$ not mapped to $0$ is an endpoint of $\{v,\bar v\}$. For each set $\{c,t_0(c),t_1(c),t_2(c)\}\in\BBB$ there is a tight path in $\FFF$ from the empty set through sets $\{c\},\{c,t_0(c)\},\{c,t_0(c),t_1(c)\}$, and $t_2(c)$ is an endpoint. The paths through $\{c\},\{c,t_0(c)\},\{c,t_0(c),t_1(c)\}$ also form tight paths in $\FFF$ to each set in $T_0$ and $T_1$. Thus, every set in $\HHH\cup\BBB\cup T_0\cup T_1$ has a tight path in $\FFF$ from the empty set, so by Theorem~\ref{base => wg} $\FFF$ is well-graded, and every nonempty set in $\BBB$ has an endpoint, so by Lemma~\ref{lem:wg with empty by endpoints} $\BBB$ is part of the base of $\FFF$. Thus, we have shown that, if $f$ is a satisfying truth assignment, $\BBB$ forms a subset of the base of a well-graded $\cup$-closed family.

In the other direction, suppose that there exists a set family $\FFF$ that is a minimal path extension of $\BBB$ for which $\BBB$ is a subset of the base. Then, in order to have a tight path from the empty set to $\{v,\bar v\}$, while not eliminating that set from the base, $\FFF$ must contain exactly one of the two sets $\{v\}$, $\{\bar v\}$; form a truth assignment $f$ in which we assign $v$ the value 1 if $\{\bar v\}$ is in $\FFF$ and the value 0 if $\{v\}$ is in $\FFF$. This must be a satisfying assignment, for if a clause $c$ had no variable satisfying it then the set $\{c,u,v,w\}$ would have no endpoints and therefore couldn't be part of the base of $\FFF$. Thus, we have shown that, if $\BBB$ forms a subset of the base of a well-graded $\cup$-closed family, then the 3-satisfiability instance $(V,\bar V,C)$ has a satisfying assignment.

We have described a polynomial time many-one reduction from the known NP-complete problem of 3-satisfiability to the problem of testing whether a set family is a subset of a base of a well-graded $\cup$-closed family, and we have shown that the latter problem is in NP. Therefore, it is NP-complete.
\end{proof}

\begin{remark}
Since the family $\BBB$ formed by this reduction contains the empty set,
the same reduction shows that it is also NP-complete to determine whether a given set $\BBB$ is a subset of the base of a learning space.
\end{remark}

\begin{remark}
It is natural to desire, not just a minimal well-graded extension, but an extension that is minimum, either in the sense of having the smallest cardinality as a well-graded set family, in the sense of having the smallest cardinality base, in the sense of having the smallest number of additional sets added to the input family $\BBB$, or in the sense of minimizing the sum of cardinalities of additional sets or of the base. We expect that these problems are computationally intractable, but do not have a hardness proof for them. The problems of minimizing base size at least belong to NP, but minimizing the cardinality of $\FFF$ may not since it involves counting the members of a family of sets that may be exponentially larger than the input.
\end{remark}

\bibliography{grand}

\begin{thebibliography}{20}
\providecommand{\natexlab}[1]{#1}
\providecommand{\url}[1]{\texttt{#1}}
\expandafter\ifx\csname urlstyle\endcsname\relax
  \providecommand{\doi}[1]{doi: #1}\else
  \providecommand{\doi}{doi: \begingroup \urlstyle{rm}\Url}\fi

\bibitem[Birkhoff(1967)]{birkh67}
G.~Birkhoff.
\newblock \emph{{Lattice Theory}}.
\newblock American Mathematical Society, Providence, R.I., 1967.

\bibitem[Bogart(1973)]{bogar73}
K.P. Bogart.
\newblock {Maximal dimensional partially ordered sets. I. Hiraguchi's theorem}.
\newblock \emph{Discrete Mathematics}, 5:\penalty0 21--31, 1973.

\bibitem[Cosyn and Uzun(2008)]{cosyn08}
E.~Cosyn and H.B. Uzun.
\newblock {Axioms for learning spaces}.
\newblock In press in the \emph{Journal of Mathematical Psychology}, 2008.

\bibitem[Davey and Priestley(1990)]{davey90}
B.A. Davey and H.A. Priestley.
\newblock \emph{{Introduction to Lattices and Order}}.
\newblock Cambridge University Press, Cambridge, London, and New Haven, 1990.

\bibitem[Djokovi{\'c}(1973)]{djoko73}
D.Z. Djokovi{\'c}.
\newblock {Distance preserving subgraphs of hypercubes}.
\newblock \emph{Journal of Combinatorial Theory, Ser. B}, 14:\penalty0
  263--267, 1973.

\bibitem[Doignon and Falmagne(1997)]{doign97}
J.-P. Doignon and J.-Cl. Falmagne.
\newblock {Well-graded families of relations}.
\newblock \emph{Discrete Mathematics}, 173:\penalty0 35--44, 1997.

\bibitem[Doignon and Falmagne(1999)]{doign99}
J.-P. Doignon and J.-Cl. Falmagne.
\newblock \emph{{Knowledge Spaces}}.
\newblock Springer-Verlag, Berlin, Heidelberg, and New York, 1999.

\bibitem[Doignon and Falmagne(1985)]{doignon:85}
J.-P. Doignon and J.-Cl. Falmagne.
\newblock {Spaces for the Assessment of Knowledge}.
\newblock \emph{International Journal of Man-Machine Studies}, 23:\penalty0
  175--196, 1985.

\bibitem[Eppstein et~al.(2007)Eppstein, Falmagne, and S.]{eppst07b}
D.~Eppstein, J.-Cl. Falmagne, and Ovchinnikov S.
\newblock \emph{{Media Theory}}.
\newblock Springer-Verlag, Berlin, Heidelberg, and New York, 2007.

\bibitem[Falmagne(1997)]{falma97a}
J.-Cl. Falmagne.
\newblock {Stochastic token theory}.
\newblock \emph{Journal of Mathematical Psychology}, 41\penalty0 (2):\penalty0
  129--143, 1997.

\bibitem[Falmagne and Doignon(1997)]{falma97b}
J.-Cl. Falmagne and J.-P. Doignon.
\newblock {Stochastic evolution of rationality}.
\newblock \emph{Theory and Decision}, 43:\penalty0 107--138, 1997.

\bibitem[Falmagne et~al.(2006)Falmagne, Cosyn, Doignon, and
  Thi{\'e}ry]{falma06}
J.-Cl. Falmagne, E.~Cosyn, J.-P. Doignon, and N.~Thi{\'e}ry.
\newblock {The assessment of knowledge, in theory and in practice}.
\newblock In B.~Ganter and L.~Kwuida, editors, \emph{Formal Concept Analysis,
  4th International Conference, ICFCA 2006, Dresden, Germany, February 13--17,
  2006}, Lecture Notes in Artificial Intelligence, pages 61--79.
  Springer-Verlag, Berlin, Heidelberg, and New York, 2006.

\bibitem[Garey and Johnson(1979)]{GareyJohnson}
M.R. Garey and D.S. Johnson.
\newblock \emph{Computers and Intractability: A Guide to the Theory of
  {NP}-Completeness}.
\newblock W. H. Freemann, 1979.

\bibitem[Graham and Pollak(1971)]{graha71}
R.L. Graham and H.~Pollak.
\newblock {On addressing problem for loop switching}.
\newblock \emph{Bell Systems Technical Journal}, 50:\penalty0 2495--2519, 1971.

\bibitem[Imrich and Klav{\v{z}}ar(2000)]{imric00}
W.~Imrich and S.~Klav{\v{z}}ar.
\newblock \emph{{Product Graphs}}.
\newblock John Wiley {\&} Sons, London and New York, 2000.

\bibitem[Koppen(1998)]{koppen:98}
M.~Koppen.
\newblock On alternative representations for knowlede spaces.
\newblock \emph{Mathematical Social Sciences}, 36:\penalty0 127--143, 1998.

\bibitem[Korte et~al.(1991)Korte, Lov{\'a}sz, and Schrader]{KorLovSch-91}
B.~Korte, L.~Lov{\'a}sz, and R.~Schrader.
\newblock \emph{{Greedoids}}.
\newblock Number~4 in Algorithms and Combinatorics. Springer-Verlag, 1991.

\bibitem[Kuzmin and Ovchinnikov(1975)]{kuzmi75}
V.B. Kuzmin and S.~Ovchinnikov.
\newblock {Geometry of preference spaces I}.
\newblock \emph{Automation and Remote Control}, 36:\penalty0 2059--2063, 1975.

\bibitem[Ovchinnikov(1980)]{ovchi80}
S.~Ovchinnikov.
\newblock {Convexity in subsets of lattices}.
\newblock \emph{Stochastica}, IV:\penalty0 129--140, 1980.

\bibitem[Winkler(1984)]{winkl84}
P.M. Winkler.
\newblock {Isometric embedding in products of complete graphs}.
\newblock \emph{Discrete Applied Mathematics}, 7:\penalty0 221--225, 1984.

\end{thebibliography}
\bibliographystyle{plainnat}
\end{document}